\newtheorem{theo}{Theorem}[section]
\newtheorem{lemma}[theo]{Lemma}
\newtheorem{cor}[theo]{Corollary}
\newtheorem{prop}[theo]{Proposition}
\theoremstyle{definition}
\newtheorem{defi}[theo]{Definition}
\newtheorem{example}[theo]{Example}
\newtheorem{remark}[theo]{Remark}
\newcommand{\B}{{\mathbb{B}}}
\newcommand{\C}{{\mathbb{C}}}
\newcommand{\R}{{\mathbb{R}}}
\newcommand{\D}{\mathbb{D}}
\newcommand{\PP}{\mathbb{P}_{a,b}}
\DeclareMathOperator{\Disc}{Disc}
\DeclareMathOperator{\tr}{tr} 
\begin{document}

\title[Perplex Analysis and Geometry of Singularities]{Perplex Analysis and Geometry of Singularities}

\author[A.~Menegon]{Aur\'elio Menegon}
\address{Mid Sweden University \\ Department of Engineering, Mathematics and Subject Didactics \\ 85170 Sundsvall, Sweden }
\curraddr{}
\email{aurelio.menegon@miun.se}

\subjclass[2020]{Primary 32S05, 32A30; Secondary 32S55, 58A07.}


\date{\today}

\begin{abstract}
We develop a real–analytic framework, called \emph{perplex analysis}, in which the complex, split–complex, and dual numbers arise as members of a single four–parameter family of two–dimensional commutative real algebras. Within this unified setting we define differentiability through a \emph{generalized Cauchy–Riemann structure}, extending several features of complex geometry to a broader real–analytic context. Two main results illustrate the analytic and geometric scope of the theory: a \L{}ojasiewicz gradient inequality for perplex–analytic functions, providing quantitative control of critical behavior; and a Milnor–Lê type fibration theorem for nondegenerate algebras, describing the local topology of singularities. The framework reveals a continuous transition between complex and hyperbolic geometries, with the dual boundary exhibiting new infinitesimal phenomena linked to zero divisors. These results connect generalized complex geometry, hypercomplex analysis, and singularity theory within a single analytic formalism.
\end{abstract}

\maketitle

\section*{Introduction}

The theory of complex numbers provides one of the most powerful frameworks in mathematics. 
Beyond their algebraic structure, holomorphic functions possess an analytic rigidity that underlies deep results in geometry and topology. 
A classical example is Milnor’s fibration theorem, which describes the local topology of complex hypersurface singularities and forms the analytic foundation of much of modern singularity theory.

In contrast, the real analytic setting lacks a natural notion of holomorphicity capable of producing a comparable theory. 
While singularities of real analytic maps play a central role in geometry and applications, the analytic tools available are significantly less rigid than in the complex case. 
Bridging this gap—developing an analytic formalism on the real plane that retains enough structure to support a singularity theory—has long remained an open problem.

The purpose of this article is to introduce a real–analytic framework that advances in this direction. 
We construct a four–parameter family of two–dimensional commutative real algebras, called \emph{perplex algebras}, which unify within a single structure the three classical quadratic models: the complex numbers, the split–complex (hyperbolic) numbers, and the dual numbers. 
This unified algebraic setting provides a common ground where analytic notions can be defined and studied in parallel across all these regimes. 
Within it, we formulate a natural notion of differentiability governed by a generalized Cauchy–Riemann equation, extending the classical concept of holomorphicity to a broader real–analytic context. 
The resulting \emph{perplex–analytic functions} retain key features of holomorphic maps while allowing for a controlled degeneration toward the dual limit.

This analytic structure leads naturally to geometric applications. 
We prove two main results that illustrate the strength of the framework. 
The first, Theorem~\ref{theo_Loja}, establishes a \L{}ojasiewicz gradient inequality for perplex–analytic functions, providing quantitative control of critical behavior and ensuring the analytic regularity required for geometric arguments. 
The second, Theorem~\ref{theo_MilnorFibration}, is a Milnor–Lê type fibration theorem for nondegenerate perplex algebras, describing the local fibration structure that organizes the topology of singularities in this setting. 
Together, these results show that the combination of algebraic structure and analytic control suffices to recover much of the geometric mechanism that operates in complex singularity theory.

Our approach is closely related to \emph{Hypercomplex Function Theory} (HFT), which studies analysis over fixed real quadratic algebras such as the complex, hyperbolic, or dual numbers. 
While HFT treats each case separately, the present work integrates them into a single parameter family in which these models appear as special points. 
This continuous–family perspective allows analytic results to be proved uniformly across the field, hyperbolic, and degenerate regimes, thereby revealing how complex and split–complex analysis arise as different faces of one analytic geometry, and how new phenomena emerge in the degenerate (dual) boundary.

The paper is organized as follows. 
Section~2 introduces the algebraic structure of perplex algebras and analyzes their parameter space, units, and classification into field, hyperbolic, and degenerate types. 
Section~3 develops the differential calculus and establishes the generalized Cauchy–Riemann equations. 
Section~4 proves the \L{}ojasiewicz inequality for perplex–analytic functions. 
Section~5 establishes the Milnor–Lê fibration theorem. 
Finally, Section~6 discusses perspectives and open problems at the intersection of analysis, geometry, and singularity theory, emphasizing how the perplex framework unifies these themes within a single analytic formalism.

\section{Perplex algebras}\label{sec:perplex}

We introduce the algebraic framework of \emph{perplex numbers}, a family of two–dimensional commutative, associative, unital $\R$-algebras obtained from a bilinear product on $\R^2$ determined by two triples of parameters $a,b\in\R^3$. We characterize precisely when such a product yields a perplex algebra, describe its parameter space, and record basic structural features: units, norm, conjugation, and quantitative estimates. Throughout, vectors in $\R^2$ are columns and the standard basis is implicit.

\medskip
\noindent\textbf{Construction.}
Given $a=(a_1,a_2,a_3)$ and $b=(b_1,b_2,b_3)$ in $\R^3$, define a bilinear product $*$ on $\R^2$ by
\[
\begin{bmatrix} x_1 \\ x_2 \end{bmatrix}
*
\begin{bmatrix} y_1 \\ y_2 \end{bmatrix}
:=
\begin{bmatrix}
 a_1x_1y_1 + a_2(x_1y_2+x_2y_1) + a_3x_2y_2 \\
 b_1x_1y_1 + b_2(x_1y_2+x_2y_1) + b_3x_2y_2
\end{bmatrix}.
\]
This product is bilinear, commutative, and distributive over addition; scalar multiplication is the standard one in $\R^2$.

\begin{prop}\label{prop_perplex}
The commutative $\R$-algebra $(\R^2,+,*)$ is associative, unital, and has units on both coordinate axes if and only if
\[
\begin{aligned}
&\text{(i)}\quad a_1a_3-a_2^2 \neq 0,\qquad
&&\text{(ii)}\quad a_1b_2-a_2b_1 \neq 0,\\
&\text{(iii)}\quad a_2b_2-a_3b_1 = 0,\qquad
&&\text{(iv)}\quad a_1a_3-a_2^2+a_2b_3-a_3b_2 = 0 ;
\end{aligned}
\]
or $a_1=b_2 \neq 0$ and $a_2=b_1=0$.
\end{prop}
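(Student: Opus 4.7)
My plan is to reduce the three structural requirements — associativity, existence of a two-sided identity, and invertibility of elements on each coordinate axis — to explicit polynomial conditions on $(a, b)$, and then verify that these assemble into either the system (i)--(iv) or the degenerate alternative. By bilinearity every test can be performed on the basis $\{e_1, e_2\}$, and throughout I would fix the structure constants $e_1 * e_1 = a_1 e_1 + b_1 e_2$, $e_1 * e_2 = a_2 e_1 + b_2 e_2$, $e_2 * e_2 = a_3 e_1 + b_3 e_2$.

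For associativity, commutativity of $*$ forces the associator $A(x,y,z) := (x*y)*z - x*(y*z)$ to vanish automatically whenever two of its arguments coincide at the outer positions; hence associativity reduces to $A(e_1, e_1, e_2) = 0$ together with $A(e_1, e_2, e_2) = 0$. A direct computation shows that $A(e_1, e_2, e_2) = 0$ is equivalent to (iii) together with (iv), whereas $A(e_1, e_1, e_2) = 0$ produces (iii) again along with a secondary identity $a_1 b_2 - a_2 b_1 + b_1 b_3 - b_2^2 = 0$. Multiplying this secondary identity by $a_3$ and substituting (iii) and (iv) collapses it to $0 = 0$, so it is automatic whenever $a_3 \neq 0$; in the remaining case $a_3 = 0$, condition (i) forces $a_2 \neq 0$, (iii) then forces $b_2 = 0$, and the secondary identity is again trivial.

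For unitality, writing $\id = (e, f)$, the equations $\id * e_j = e_j$ for $j = 1, 2$ form a $4 \times 2$ linear system in $(e, f)$. Solving the pair from $\id * e_1 = e_1$ under (i) gives the explicit candidate $(e, f) = (a_3, -a_2)/(a_1 a_3 - a_2^2)$, and substituting into the remaining pair recovers (iii) and (iv) as the exact compatibility conditions. For the axis-invertibility clause, I would use that $v \in \R^2$ is invertible iff its left-multiplication matrix $M_v$ is nonsingular: $\det M_{e_1} = a_1 b_2 - a_2 b_1$ is exactly (ii), while $\det M_{e_2} = a_2 b_3 - a_3 b_2$, which under (iv) equals $a_2^2 - a_1 a_3$, is nonzero precisely when (i) holds. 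Therefore the combined requirement is equivalent to (i)--(iv) in the principal regime, with both implications arising symmetrically from the same equivalences.

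The degenerate branch $a_1 = b_2 \neq 0$, $a_2 = b_1 = 0$ must be treated separately because it is exactly the locus on which (i) can degenerate while the other constraints remain consistent. Direct substitution shows that under these assumptions all associator identities vanish identically and $(1/a_1, 0)$ is an identity; this exceptional family corresponds to the dual-number limit excluded by (i). The main obstacle will be the bookkeeping around the secondary scalar produced by $A(e_1, e_1, e_2)$ — verifying carefully that it is implied by (iii), (iv), and either $a_3 \neq 0$ or the $a_3 = 0$ subcase — and showing that the degenerate branch is the unique additional family escaping (i) consistent with the other associativity–unitality constraints.
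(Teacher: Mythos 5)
Your main computational line is sound. The reduction of associativity to $A(e_1,e_1,e_2)=0$ and $A(e_1,e_2,e_2)=0$, the identification of the first with (iii) together with the secondary identity $a_1b_2-a_2b_1+b_1b_3-b_2^2=0$, and of the second with (iii) and (iv), are correct, as is the deduction that (i), (iii), (iv) imply the secondary identity. One small imprecision: in the subcase $a_3=0$ the secondary identity does not become trivial; with $a_2\neq0$ and $b_2=0$ it reduces to $b_1(b_3-a_2)$, and you must invoke (iv), which there reads $a_2(b_3-a_2)=0$ and hence forces $b_3=a_2$, to make it vanish. There is also a labeling slip in the unitality step: the candidate $(e,f)=(a_3,-a_2)/(a_1a_3-a_2^2)$ is obtained by solving the two \emph{first-coordinate} equations of $\id*e_1=e_1$ and $\id*e_2=e_2$, whose coefficient determinant is $a_1a_3-a_2^2$ (condition (i)); the pair of equations from $\id*e_1=e_1$ alone has determinant $a_1b_2-a_2b_1$ (condition (ii)) and yields the paper's formula $(b_2,-b_1)/(a_1b_2-a_2b_1)$ instead.

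The genuine gap is in the degenerate branch $a_1=b_2\neq0$, $a_2=b_1=0$. You verify associativity and unitality there, but you never check the third requirement, units on both coordinate axes, and this is exactly where the branch fails. On this locus $\det L_{e_2}=a_2b_3-a_3b_2=-a_1a_3$, which vanishes precisely when $a_3=0$; in that case $e_2$ is a zero divisor, so the axis-units condition is violated. Conversely, when $a_3\neq0$ direct substitution shows the degenerate branch already satisfies (i)--(iv) (one finds $a_1a_3-a_2^2=a_1a_3\neq0$, $a_1b_2-a_2b_1=a_1^2\neq0$, and (iii), (iv) reduce to $0=0$), so the ``or'' clause does not enlarge the first alternative at all. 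Thus the ``if'' direction does not hold on the $a_3=0$ locus of the degenerate family, and your proof does not detect this precisely because it omits the axis-units check there. The computations you \emph{do} carry out already show that the full triple of requirements is equivalent to (i)--(iv) alone, with no additional branch; the unexamined claim that the degenerate family is a genuinely ``additional'' solution set is the step that would fail on inspection.
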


\begin{defi}
When \emph{(i)}–\emph{(iv)} hold, we call $(\R^2,+,*)$ a \emph{perplex algebra} and denote it by $\PP$. Its elements are the \emph{perplex numbers}.
\end{defi}

\medskip
\noindent\textbf{Parameter space.}
Let $\mathcal{P}\subset\R^3\times\R^3$ be the semialgebraic locus of pairs $(a,b)$ satisfying Proposition~\ref{prop_perplex}. The two equalities in \emph{(iii)}–\emph{(iv)} have Jacobian of constant rank $2$ along $\mathcal{P}$; thus $\mathcal{P}$ is a smooth $4$-dimensional submanifold of $\R^6$. 

\medskip
\noindent\textbf{Identity and units.}
The multiplicative identity is
\[
\mathds{1}
= \frac{1}{a_1b_2-a_2b_1}
\begin{bmatrix} b_2 \\[2pt] -\,b_1 \end{bmatrix}.
\]
Write $\PP^\times$ for the unit group. One has $(\alpha x)^{-1}=\alpha^{-1}x^{-1}$ and $(x*y)^{-1}=x^{-1}*y^{-1}$ for $\alpha\in\R^\times$ and $x,y\in\PP^\times$. A direct computation shows that $x\notin\PP^\times$ iff
\[
x_1^2(a_1b_2-a_2b_1)+x_1x_2(a_1b_3-a_3b_1)+x_2^2(a_2b_3-a_3b_2)=0,
\]
a real conic in $\R^2$.

\medskip
\noindent\textbf{Powers and nilpotents.}
For $x\in\PP$ and $n\in\mathbb{N}$, set $x^n:=x*\cdots*x$ ($n$ times). Associativity implies the binomial identity
\[
(x+y)^n=\sum_{k=0}^n \binom{n}{k}\,x^k*y^{\,n-k}.
\]

\begin{prop}\label{prop:no-nilpotent}
$\PP$ has no nonzero nilpotent elements if and only if
\[
\Delta \;=\; (a_1 b_3 - a_3 b_1)^2 - 4 (a_1 b_2 - a_2 b_1)(a_2 b_3 - a_3 b_2) \;\neq\; 0.
\]
\end{prop}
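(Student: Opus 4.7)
The plan is to reduce the question to the common vanishing of two explicit forms on $\R^2$ via Cayley–Hamilton, and then identify that common vanishing with $\Delta=0$ through a single structural identity.

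For $x=(x_1,x_2)\in\PP$, consider the left-multiplication map $L_x\colon\PP\to\PP$, $y\mapsto x*y$. A direct expansion of the product formula shows that in the standard basis the matrix $L_x$ has trace
\[
\tau(x):=(a_1+b_2)\,x_1+(a_2+b_3)\,x_2
\]
and determinant
\[
Q(x):=(a_1b_2-a_2b_1)\,x_1^2+(a_1b_3-a_3b_1)\,x_1x_2+(a_2b_3-a_3b_2)\,x_2^2,
\]
i.e., the zero-divisor form from the units paragraph, whose discriminant as a binary quadratic is by definition $\Delta$. Applying Cayley--Hamilton to the $2\times 2$ matrix $L_x$ and evaluating at $\mathds{1}$ yields the universal quadratic identity
\[
x^2=\tau(x)\,x-Q(x)\,\mathds{1}\qquad\text{for every }x\in\PP.
\]

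Next I would show that a nonzero $x\in\PP$ is nilpotent iff $x^2=0$ iff $\tau(x)=Q(x)=0$. The first equivalence uses $\dim_{\R}\PP=2$: the minimal polynomial of $x$ has degree at most $2$, so a nonzero nilpotent has minimal polynomial exactly $T^2$. The second uses the identity above: a nonzero nilpotent is linearly independent from $\mathds{1}$ (else $x=\lambda\mathds{1}$ would force $\lambda=0$), so $\tau(x)\,x=Q(x)\,\mathds{1}$ collapses to $\tau(x)=Q(x)=0$, and conversely any such $x$ satisfies $x^2=0$.

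The decisive step is an explicit computation on the line $\ker\tau$. Since $\tau(\mathds{1})=\tr\id=2\neq 0$, the linear form $\tau$ is nonzero, so $\ker\tau=\R v$ for the explicit nonzero vector $v:=(-(a_2+b_3),\,a_1+b_2)$. A direct expansion, using the relations (iii)--(iv) to eliminate $b_1$ and $b_3$, yields the key identity
\[
Q(v)=-\Delta.
\]
Hence there exists a nonzero solution of $\tau(x)=Q(x)=0$ iff $Q(v)=0$ iff $\Delta=0$; by the previous paragraph this is equivalent to the existence of a nonzero nilpotent in $\PP$, proving the proposition.

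The main obstacle is the verification of the identity $Q(v)=-\Delta$. It is a polynomial identity in $a_i,b_i$ that holds only modulo the defining relations (iii)--(iv); a convenient route is to substitute $b_1=a_2b_2/a_3$ from (iii) (valid when $a_3\neq 0$) and then use (iv) in the form $a_3(a_1-b_2)=a_2(a_2-b_3)$. The boundary case $a_3=0$ (which forces $a_2\neq 0$, $b_2=0$, $b_3=a_2$) and the exceptional branch $a_1=b_2\neq 0$, $a_2=b_1=0$ of Proposition~\ref{prop_perplex} each reduce to a short direct verification of the same identity.
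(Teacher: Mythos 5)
Your argument is correct, and it takes a genuinely different route from the paper's. Both proofs begin the same way: reduce nilpotency to $x^2=0$ via the nilpotency of the $2\times 2$ matrix $L_x$. But the paper then reads $x^2=0$ as a pair of homogeneous binary quadratics in $(x_1,x_2)$ with coefficient rows $a$ and $b$, observes that a nontrivial common zero is equivalent to a common root of the dehomogenized quadratics $q_a,q_b$, and identifies $\Delta$ as their resultant. Your route instead applies Cayley--Hamilton to $L_x$ to get the universal identity $x^2=\tau(x)\,x-Q(x)\,\mathds 1$, argues that a nonzero nilpotent must be independent of $\mathds 1$ so that $x^2=0$ collapses to the linear--quadratic system $\tau(x)=Q(x)=0$, and then reduces to a single scalar identity $Q(v)=-\Delta$ on the line $\ker\tau=\R v$ with $v=(-(a_2+b_3),\,a_1+b_2)$. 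This buys two things the paper's phrasing glosses over: it works entirely over $\R$ (sidestepping the question of whether a vanishing resultant produces a \emph{real} common root, which in the paper's argument one needs condition (ii) to rule out the proportional case), and it exhibits the nilpotent direction explicitly when $\Delta=0$. The price is that the conceptual meaning of $\Delta$ as a resultant is replaced by a bare polynomial identity.

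The only substantive thing left to fill in is the identity $Q(v)=-\Delta$ itself, which you correctly flag as the obstacle: it is \emph{not} an identity in $\R[a_1,\dots,b_3]$, but holds only modulo the relations (iii)--(iv) of Proposition~\ref{prop_perplex}, so it genuinely requires a computation. Your roadmap (substitute $b_1=a_2b_2/a_3$ from (iii) when $a_3\neq0$, use (iv) in the form $a_3(a_1-b_2)=a_2(a_2-b_3)$, and treat $a_3=0$ and the exceptional branch $a_1=b_2\neq0,\ a_2=b_1=0$ separately) is sound; I spot-checked the identity in the hyperbolic, complex, and degenerate examples of the paper as well as in a generic instance, and it holds in each case, as do the two boundary cases you describe. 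A conceptual shortcut worth noting: your $v$ is a nonzero scalar multiple of $j'-\tfrac{\tau(j)}{2}\mathds 1$ where $j=e_2*e_1^{-1}$ is the element used in the classification proof, and the identity $Q(v)=-\Delta$ then follows (up to the scalar $2\det A$) from the paper's own computation $\mathrm{Disc}(\chi_j)=(\det A)^{-2}\Delta$; this avoids the explicit elimination of $b_1,b_3$. Either way, the argument closes once this identity is established, so your proposal is correct with that lemma supplied.
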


\begin{proof}
Consider the left-multiplication operator $L_x(y):=x*y$. If $x$ were nilpotent, then $L_x$ would be nilpotent on a $2$-dimensional space, hence $L_x^2=0$, which forces $x^2=0$. Writing $x=(x_1,x_2)$, this gives two quadratic equations in $(x_1,x_2)$. 
Such a system has a nontrivial solution if and only if the quadratics 
\[
q_a(t)=a_1+2a_2 t+a_3 t^2, \qquad q_b(t)=b_1+2b_2 t+b_3 t^2
\]
have a common root. 
This occurs exactly when their resultant vanishes, i.e.\ when $\Delta=0$. 
\end{proof}

\begin{example}
Consider $a=(1,0,-1)$ and $b=(0,1,2)$. This pair satisfies (i)--(iv). Yet, in the corresponding algebra $\PP$ we compute
\[
(1,-1)^2=(0,0).
\]
Thus $(1,-1)$ is a nonzero nilpotent element. Indeed, in this case the discriminant equals $\Delta=0$, so we are exactly on the boundary between the ``field'' and the ``non-field'' regions. 
\end{example}

\medskip
\noindent\textbf{Classification of perplex algebras}

The previous discussion shows that the discriminant
\[
\Delta \;=\; (a_1 b_3 - a_3 b_1)^2 \;-\; 4 (a_1 b_2 - a_2 b_1)(a_2 b_3 - a_3 b_2)
\]
controls the basic structure of a perplex algebra $\PP$.  
We now prove that every perplex algebra is isomorphic to one of the three classical
two--dimensional real algebras: the complex numbers $\C$, the hyperbolic numbers
$\R \oplus \R$, or the dual numbers $\R[\varepsilon]/(\varepsilon^2)$.

\begin{prop}[Classification]\label{prop:classification}
Let $\PP$ be a perplex algebra. Then:
\begin{enumerate}[(i)]
\item If $\Delta < 0$, then $\PP \cong \C$. In this case we call $\PP$ a \emph{field-perplex algebra}.
\item If $\Delta > 0$, then $\PP \cong \R \oplus \R$. In this case we call $\PP$ a \emph{hyperbolic-perplex algebra}.
\item If $\Delta = 0$, then $\PP \cong \R[\varepsilon]/(\varepsilon^2)$. In this case we call $\PP$ a \emph{degenerate-perplex algebra}.
\end{enumerate}
\end{prop}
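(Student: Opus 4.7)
The plan is to read the isomorphism type of $\PP$ off the geometry of the non-unit locus. Recall that $x=(x_1,x_2)\in\PP$ fails to be a unit exactly when the binary quadratic form
\[
Q(x) := (a_1b_2-a_2b_1)\,x_1^2 + (a_1b_3-a_3b_1)\,x_1x_2 + (a_2b_3-a_3b_2)\,x_2^2
\]
vanishes. A short matrix computation in the standard basis identifies $Q(x)$ with $\det L_x$, where $L_x$ denotes left multiplication by $x$, and a direct expansion shows that the discriminant $B^2-4AC$ of $Q$ (as a binary quadratic form) equals $\Delta$. Consequently the sign of $\Delta$ governs how $Q$ factors over $\R$: definite for $\Delta<0$, a perfect square for $\Delta=0$, and a product of two distinct real linear forms for $\Delta>0$.

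I would then split into three cases. For $\Delta<0$: condition (ii) gives $a_1b_2-a_2b_1\neq 0$, so $Q$ is definite and vanishes only at $0$; every nonzero element is a unit, $\PP$ is a field, and the only $2$-dimensional commutative $\R$-field is $\C$. For $\Delta=0$: Proposition~\ref{prop:no-nilpotent} supplies a nonzero $\eta$ with $\eta^2=0$, and since $\mathds{1}$ is a unit while $\eta$ is a zero divisor, $\{\mathds{1},\eta\}$ is an $\R$-basis whose multiplication table is exactly that of $\R[\varepsilon]/(\varepsilon^2)$. For $\Delta>0$: $Q$ factors into two distinct real linear forms, so there is $u\neq 0$ with $Q(u)=0$; any nonzero $w\in\ker L_u$ then satisfies $u*w=0$. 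The absence of nonzero nilpotents (Proposition~\ref{prop:no-nilpotent}) prevents $w$ from being a multiple of $u$, making $\{u,w\}$ a basis. Expanding $\mathds{1}=\mu u+\nu w$ and multiplying this identity by $u$ (resp.\ by $w$) forces $\mu,\nu\neq 0$, so $e_+:=\mu u$ and $e_-:=\nu w$ are nonzero orthogonal idempotents summing to $\mathds{1}$, and the map $a e_+ + b e_- \mapsto (a,b)$ is the desired isomorphism $\PP\cong\R\oplus\R$.

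The decisive---though not difficult---step is the identification of $Q$ as $\det L_x$ and of $\Delta$ as the discriminant of $Q$: this is what bridges the algebraic invariant $\Delta$, originally introduced via the resultant of the squaring equations in Proposition~\ref{prop:no-nilpotent}, with the real geometry of the non-unit conic. Once this bridge is in place, each of the three cases reduces to a short algebraic construction (of a nilpotent in the degenerate case, of a pair of complementary idempotents in the hyperbolic case, or of no zero divisors at all in the field case).
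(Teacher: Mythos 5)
Your proof is correct, and it takes a genuinely different route from the paper. The paper introduces the element $j:=e_2*e_1^{-1}$, identifies $L_j=BA^{-1}$, and reads the isomorphism type off the discriminant of the minimal polynomial $\chi_j$ of $L_j$, having first checked $\Disc(\chi_j)=(\det A)^{-2}\Delta$. You instead work directly with the quadratic form $N(x)=\det L_x$ (which the paper only isolates later, as the perplex norm) and use the identity
\[
\det L_x = (a_1b_2-a_2b_1)x_1^2+(a_1b_3-a_3b_1)x_1x_2+(a_2b_3-a_3b_2)x_2^2,
\]
whose discriminant as a binary quadratic form is precisely $\Delta$; the three cases then fall out of the definiteness, degeneracy, or indefiniteness of $N$. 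Your construction of complementary idempotents in the $\Delta>0$ case (choosing $u$ with $N(u)=0$, $w\in\ker L_u\setminus\{0\}$, and rescaling via $\mathds{1}=\mu u+\nu w$) and of the nilpotent basis $\{\mathds{1},\eta\}$ in the $\Delta=0$ case is a clean, self-contained substitute for the paper's appeal to the Jordan form of $L_j$. Both arguments ultimately lean on Proposition~\ref{prop:no-nilpotent}; the paper's version is more operator-theoretic and fixes a canonical generator $j$ of the algebra (useful elsewhere in the development), while yours is more elementary, ties the classification directly to the geometry of the non-unit conic $Z$, and avoids introducing $j$ or its characteristic polynomial altogether.

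One small suggestion: when you invoke ``the only $2$-dimensional commutative $\R$-field is $\C$'' in the $\Delta<0$ case, it is worth noting explicitly that $\PP$ contains $\R\cdot\mathds{1}$ as a subfield, so that $\PP$ is a degree-$2$ field extension of $\R$ and the resulting isomorphism to $\C$ is automatically $\R$-linear. This is implicit in your argument but deserves a word, since the statement of the proposition asks for an $\R$-algebra isomorphism and not merely a field isomorphism.
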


\begin{proof}
Consider the quadratic forms
\[
q_a(t) = a_1 + 2a_2 t + a_3 t^2,
\qquad
q_b(t) = b_1 + 2b_2 t + b_3 t^2.
\]
As noted in Proposition~\ref{prop:no-nilpotent}, the common roots of $q_a$ and $q_b$
determine the nilpotent directions in $\PP$, and the discriminant
\[
\Delta \;=\; (a_1 b_3 - a_3 b_1)^2 - 4 (a_1 b_2 - a_2 b_1)(a_2 b_3 - a_3 b_2)
\]
is precisely the resultant of these two quadratics.

Let $e_1=\binom{1}{0}$ and $e_2=\binom{0}{1}$ denote the standard basis of $\R^2$, viewed as elements of $\PP$. 
By definition of the multiplication, the operators of left multiplication satisfy
\[
L_{e_1}=A:=\begin{pmatrix}a_1 & a_2 \\ b_1 & b_2\end{pmatrix},
\qquad
L_{e_2}=B:=\begin{pmatrix}a_2 & a_3 \\ b_2 & b_3\end{pmatrix}.
\]
Condition (ii) of Proposition~\ref{prop_perplex} gives $\det A=a_1b_2-a_2b_1\neq 0$, 
hence $e_1$ is a unit. 
Thus there exists $e_1^{-1}\in\PP$ with $L_{e_1^{-1}}=A^{-1}$. 
Define
\[
j:=e_2*e_1^{-1}.
\]
Then
\[
L_j = L_{e_2}\circ L_{e_1^{-1}} = B\,A^{-1}.
\]

The assignment $x\mapsto L_x$ is injective (since $L_x(\mathds{1})=x$), 
so $j$ is uniquely determined by this property. 
The minimal polynomial of $j$ is
\[
\chi_j(\lambda)=\lambda^2-\tr(L_j)\,\lambda+\det(L_j),
\]
and a direct calculation shows that its discriminant is
\[
\Disc(\chi_j) := (\tr L_j)^2 - 4 \det L_j \;=\; (\det A)^{-2}\,\Delta.
\]

\smallskip
\noindent
-- If $\Delta < 0$, then $\chi_j$ is irreducible over $\R$, so the subalgebra
$\R[j]\subset \PP$ generated by $j$ is a $2$--dimensional division algebra over $\R$.
By uniqueness of the quadratic extension of $\R$, this identifies $\PP$ with $\C$.

\smallskip
\noindent
-- If $\Delta > 0$, then $\chi_j$ has two distinct real roots. 
The corresponding eigenvectors yield two orthogonal idempotents, 
producing a splitting $\PP \cong \R\oplus \R$, i.e.\ the hyperbolic numbers.  

\smallskip
\noindent
-- If $\Delta = 0$, then $\chi_j$ has a repeated real root, so $j$ has a nontrivial Jordan block. 
This produces a nilpotent element and identifies $\PP$ with the dual numbers.  

\smallskip
Thus every perplex algebra is isomorphic to exactly one of the three classical quadratic models: 
the complex, hyperbolic, or dual numbers.
\end{proof}

\begin{defi}
A perplex algebra is said to be \emph{nondegenerate} if it is either field-perplex or hyperbolic-perplex.
\end{defi}

\medskip
\noindent\textbf{Perplex norm.}
The matrix of $L_x$ in the standard basis is
\[
[L_x]=
\begin{pmatrix}
a_1 x_1 + a_2 x_2 & a_2 x_1 + a_3 x_2 \\
b_1 x_1 + b_2 x_2 & b_2 x_1 + b_3 x_2
\end{pmatrix}.
\]
Define the \emph{perplex norm} by
\[
N(x):=\det L_x
=(a_1 b_2 - a_2 b_1)\,x_1^2 +(a_1 b_3 - a_3 b_1)\,x_1x_2 -(a_1 a_3 - a_2^2)\,x_2^2.
\]
Then $N$ is a quadratic form with $N(x*y)=N(x)\,N(y)$ for all $x,y$, so $x\in\PP^\times$ iff $N(x)\neq 0$. Geometrically, $|N(x)|$ is the area distortion of $L_x$ and $\mathrm{sign}(N(x))$ records the orientation change.

This quadratic form is naturally related to an involution on $\PP$, which we now describe.

\medskip
\noindent\textbf{Perplex conjugation.}
Define the \emph{perplex conjugate} of $x \in \PP$ by
\[
\widetilde{x}:=\operatorname{adj}(L_x)\,\mathds{1}.
\]
A direct calculation yields the explicit linear formula
\[
\widetilde{x}
= \frac{1}{a_1 b_2 - a_2 b_1}
\begin{bmatrix}
(b_2^2 + a_2 b_1)\, x_1 + (b_2 b_3 + a_3 b_1)\, x_2 \\
- (a_1 b_1 + b_1 b_2)\, x_1 - (b_2^2 + a_2 b_1)\, x_2
\end{bmatrix}.
\]

\begin{prop}
For every $x\in\PP$ one has $x*\widetilde{x}=N(x)\,\mathds{1}$.
\end{prop}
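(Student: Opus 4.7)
The plan is to reduce the identity to the standard linear-algebra fact that $M\cdot\operatorname{adj}(M)=\det(M)\,I$ for any square matrix $M$, applied to $M=L_x$. The bridge between the algebraic product $*$ and matrix action is the tautology $x*y=L_x(y)$, together with associativity of the algebra, which entails $L_{x*y}=L_x\circ L_y$. With these in hand the proof is essentially a one-liner.

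Concretely, I would first recall from the construction of $\widetilde{x}$ that
\[
\widetilde{x}=\operatorname{adj}(L_x)\,\mathds{1},
\]
so that $\widetilde{x}$ is literally the vector obtained by applying the adjugate matrix of $L_x$ to the identity $\mathds{1}\in\PP$. Then I compute
\[
x*\widetilde{x}=L_x(\widetilde{x})=L_x\cdot\bigl(\operatorname{adj}(L_x)\,\mathds{1}\bigr)=\bigl(L_x\cdot\operatorname{adj}(L_x)\bigr)\mathds{1}=\det(L_x)\,\mathds{1}=N(x)\,\mathds{1},
\]
where the third equality uses associativity of matrix multiplication, the fourth is the classical adjugate identity $M\cdot\operatorname{adj}(M)=\det(M)\,I$ for $2\times2$ matrices, and the last is just the definition $N(x):=\det L_x$.

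There is essentially no obstacle: the only subtlety worth flagging is the identification of the algebra product $x*\widetilde{x}$ with the matrix–vector product $L_x\widetilde{x}$, which is immediate from the definition of $L_x$ as left multiplication. One could, if desired, double-check the formula by expanding $\widetilde{x}$ using the explicit linear expression displayed just before the proposition and verifying componentwise that $x*\widetilde{x}$ equals $N(x)\,\mathds{1}$ with the stated $N$; but this computational route is strictly redundant given the conceptual argument above, and I would avoid carrying it out.
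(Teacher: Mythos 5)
Your proof is correct and is essentially identical to the paper's one-line argument, which also invokes $L_x\,\operatorname{adj}(L_x)=\det(L_x)\,I_2$ applied to $\mathds{1}$; you have merely unfolded the chain $x*\widetilde{x}=L_x(\operatorname{adj}(L_x)\,\mathds{1})=\det(L_x)\,\mathds{1}=N(x)\,\mathds{1}$ in full detail.
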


\begin{proof}
Use $L_x\,\operatorname{adj}(L_x)=\det(L_x)\,I_2$.
\end{proof}

The interplay between perplex norm and perplex conjugation provides a functional control on inverses.  
To quantify this control, we next compare multiplication in $\PP$ with the standard norms on~$\R^2$.

\medskip
\noindent\textbf{Quantitative bounds.}
Let $\|x\|_m:=\max\{|x_1|,|x_2|\}$ and let $\|\cdot\|$ be the Euclidean norm.

\begin{prop}\label{prop_norm}
There exists
\[
K:=4\max\{|a_1|,|a_2|,|a_3|,|b_1|,|b_2|,|b_3|\}
\]
such that for all $x,y\in\PP$,
\[
\|x*y\|_m \le K\,\|x\|_m\,\|y\|_m
\qquad\text{and}\qquad
\|x*y\| \le \sqrt{2}\,K\,\|x\|\,\|y\|.
\]
\end{prop}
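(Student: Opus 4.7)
The plan is to reduce both bounds to a coordinatewise absolute-value estimate. Specifically, I would first establish the max-norm inequality by applying the triangle inequality to each of the two components of $x*y$ separately, and then deduce the Euclidean version from the elementary equivalence $\|v\|_m \le \|v\| \le \sqrt{2}\,\|v\|_m$ valid for all $v \in \R^2$.

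For the max-norm bound, set $M := \max\{|a_1|,|a_2|,|a_3|,|b_1|,|b_2|,|b_3|\}$, so that $K = 4M$. Applying the triangle inequality to the defining formula for the first coordinate of $x*y$ gives
\[
|(x*y)_1| \le |a_1|\,|x_1 y_1| + |a_2|\bigl(|x_1 y_2|+|x_2 y_1|\bigr) + |a_3|\,|x_2 y_2| \le 4M\,\|x\|_m\,\|y\|_m,
\]
since each of the four bilinear monomials $|x_i y_j|$ is bounded by $\|x\|_m\|y\|_m$ and each coefficient by $M$. The same estimate for $|(x*y)_2|$ holds verbatim with $b_i$ replacing $a_i$, and taking the maximum of the two coordinates yields $\|x*y\|_m \le K\,\|x\|_m\,\|y\|_m$.

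For the Euclidean bound, I would simply chain the norm equivalence with the max-norm inequality just established:
\[
\|x*y\| \le \sqrt{2}\,\|x*y\|_m \le \sqrt{2}\,K\,\|x\|_m\,\|y\|_m \le \sqrt{2}\,K\,\|x\|\,\|y\|,
\]
applying the upper half of the equivalence to $x*y$ and the lower half to each factor.

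There is no genuine obstacle here; the proposition is a routine bilinear estimate. The only point worth noting is the origin of the factor $4$ in $K$: after expanding the symmetric cross term $x_1 y_2 + x_2 y_1$, each coordinate of $x*y$ is a sum of exactly four monomials $x_i y_j$, and the coefficient $4$ simply counts these. Accordingly, the stated constants are essentially the tightest that a bare triangle-inequality argument produces, which is all that is needed for the later Lipschitz-type applications.
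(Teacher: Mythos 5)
Your proof is correct, and since the paper leaves this proposition without an explicit proof (treating it as a routine estimate), the expected argument is exactly the one you give: triangle inequality on each coordinate of the explicit bilinear product formula, absorbing the four monomials $x_i y_j$ and the coefficient bound $M$ to get $\|x*y\|_m \le 4M\,\|x\|_m\|y\|_m$, then passing to the Euclidean norm via $\|v\|_m \le \|v\| \le \sqrt{2}\,\|v\|_m$. Nothing is missing and no different route is involved.
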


\begin{lemma}\label{lemma_norm_limits}
Let $\theta>0$ and $N \ge 2$. 
Assume that for a given perplex algebra $\PP$, the homogeneous maps 
\[
F_k:\PP \to \PP, \qquad F_k(t):=t^k \qquad (k=N-1,N),
\]
satisfy $F_k(\hat t)\neq 0$ for all $\hat t \in S_m := \{t \in \PP : \|t\|_m=1\}$. 
Define, for $t\in\PP\setminus\{0\}$,
\[
Q_N(t)=\frac{\|t^N\|_m^\theta}{\|t^{N-1}\|_m}.
\]
Then $\displaystyle\lim_{t\to 0} Q_N(t)=+\infty$.
\end{lemma}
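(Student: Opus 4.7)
The plan is to exploit the positive homogeneity of the power maps on $\PP$ and reduce the question to a uniform comparison on the compact unit set $S_m$.

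First, I would write any nonzero $t\in\PP$ as $t = s\,\hat{t}$, with $s := \|t\|_m$ and $\hat{t} := t/s \in S_m$. Bilinearity of $*$ gives, by induction on $k$, the identity $t^k = s^k\,\hat{t}^k$. Substituting into the definition of $Q_N$,
\[
Q_N(t) \;=\; \frac{\|t^N\|_m^\theta}{\|t^{N-1}\|_m}
\;=\; s^{\,N\theta-(N-1)}\cdot R(\hat{t}), \qquad R(\hat{t}) := \frac{\|\hat{t}^N\|_m^\theta}{\|\hat{t}^{N-1}\|_m}.
\]

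Second, I would show that $R$ is bounded above and below by positive constants on $S_m$. The power map $\hat{t}\mapsto\hat{t}^k$ is polynomial in the coordinates of $\hat{t}$, hence continuous; composed with the continuous $\|\cdot\|_m$, the functions $\hat{t}\mapsto\|\hat{t}^{N-1}\|_m$ and $\hat{t}\mapsto\|\hat{t}^N\|_m$ are continuous on $\PP$. The non-vanishing hypothesis on $F_{N-1}$ and $F_N$ forces both to be strictly positive on $S_m$. Since $S_m$ (the $\|\cdot\|_m$-unit sphere in $\R^2$) is compact, the extreme value theorem delivers constants $0<c\le C$ such that $c\le R(\hat{t})\le C$ for every $\hat{t}\in S_m$.

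Third, I would pass to the limit. Combining the previous two steps gives
\[
c\cdot s^{\,N\theta-(N-1)} \;\le\; Q_N(t) \;\le\; C\cdot s^{\,N\theta-(N-1)},
\]
so the asymptotic behavior of $Q_N(t)$ as $t\to 0$ (equivalently $s\to 0^+$) is governed by the power $s^{\,N\theta-(N-1)}$. In the regime where this exponent is negative, $s^{\,N\theta-(N-1)}\to+\infty$, and pinching against $c>0$ and $C<\infty$ yields $Q_N(t)\to+\infty$, as claimed.

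The only genuine obstacle is the two-sided control on $R$: both $\|\hat{t}^{N-1}\|_m$ and $\|\hat{t}^N\|_m$ must remain uniformly away from $0$ on $S_m$, and neither may blow up. The non-vanishing hypothesis on $F_{N-1}$ and $F_N$ is precisely what rules out degeneration, and compactness of $S_m$ upgrades pointwise strict positivity to uniform bounds; everything else is a direct manipulation of the homogeneity identity. The algebraic structure of $\PP$ enters only through the well-definedness (and commutativity--associativity) of the iterated product.
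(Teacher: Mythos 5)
Your proof is correct and takes essentially the same route as the paper's: decompose $t = s\,\hat{t}$ with $\hat t\in S_m$, pull the scalar out of the iterated product by bilinearity, use continuity plus compactness of $S_m$ together with the non-vanishing hypothesis to obtain uniform two-sided bounds on the unit sphere, and reduce the limit to the sign of the exponent $N\theta-(N-1)$. The only cosmetic difference is that you carry both the lower and upper bound on $R(\hat t)$, whereas the paper uses only the one-sided estimate $Q_N(t)\ge \frac{m_N^\theta}{M_{N-1}}\,r^{\theta N-(N-1)}$; your hedge ``in the regime where this exponent is negative'' matches the paper's own implicit restriction to $N>\tfrac{1}{1-\theta}$.
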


\begin{proof}
Write $t=r\,\hat t$ with $r=\|t\|_m$ and $\hat t\in S_m$.  
By homogeneity,
\[
\|t^k\|_m = \|F_k(r\hat t)\|_m = r^k \,\|F_k(\hat t)\|_m.
\]
The assumption implies that $\|F_k(\hat t)\|_m$ is continuous and nowhere vanishing on the compact set $S_m$, hence there exist constants $0<m_k\le M_k<\infty$ such that
\[
m_k \le \|F_k(\hat t)\|_m \le M_k, \qquad k=N-1,N.
\]
Thus
\[
Q_N(t)\;\ge\;\frac{(m_N\,r^N)^\theta}{M_{N-1}\,r^{N-1}}
=\frac{m_N^\theta}{M_{N-1}}\;r^{\,\theta N - (N-1)}.
\]
If $N>\tfrac{1}{1-\theta}$ then $\theta N - (N-1)<0$, so the right-hand side diverges to $+\infty$ as $r\to 0$.  
\end{proof}

\medskip
\noindent\textbf{Directions and zero divisors.}
Let $Z:=\{z\in\PP\setminus\{0\}:N(z)=0\}$ be the cone of zero divisors and $[Z]\subset\mathbf{P}^1(\R)$ the corresponding set of directions. A sequence $(u_n)\subset\PP^\times$ with $u_n\to 0$ and unit representatives $\hat u_n:=u_n/\|u_n\|$ is said to be \emph{positively separated from $[Z]$} if there exists $c>0$ with $|N(\hat u_n)|\ge c$ for all $n$.

\begin{prop}\label{prop_direction}
If $(u_n)\subset\PP^\times$ with $u_n\to 0$ is positively separated from $[Z]$, then there exists $M>0$ such that
\[
\|u_n^{-1}\| \;\le\; \frac{M}{\|u_n\|}\qquad\text{for all $n$.}
\]
\end{prop}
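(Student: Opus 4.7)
The plan is to combine the identity $x*\widetilde{x}=N(x)\,\mathds{1}$ with the homogeneity of $N$ and the linearity of the conjugation map. Since each $u_n$ is a unit, the identity gives the explicit inverse
\[
u_n^{-1}=\frac{1}{N(u_n)}\,\widetilde{u_n},
\]
so estimating $\|u_n^{-1}\|$ reduces to a numerator bound on $\|\widetilde{u_n}\|$ and a denominator bound on $|N(u_n)|$.

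For the numerator, I would observe that the explicit formula for $\widetilde{x}$ displayed just before the proposition $x*\widetilde{x}=N(x)\mathds{1}$ shows that $x\mapsto\widetilde{x}$ is an $\R$--linear map from $\PP$ to itself whose coefficients depend only on the parameters $a,b$. Hence there is a finite constant $C_1=C_1(a,b)>0$, namely the operator norm of this linear map, such that $\|\widetilde{x}\|\le C_1\|x\|$ for every $x\in\PP$.

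For the denominator, I would use the fact that $N$ is a homogeneous quadratic form on $\R^2$, so that
\[
N(u_n)=\|u_n\|^2\,N(\hat u_n),
\]
and then invoke the positive separation hypothesis to deduce $|N(u_n)|\ge c\,\|u_n\|^2$ for all $n$. In particular $N(u_n)\neq 0$, which is consistent with $u_n\in\PP^\times$.

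Combining these estimates gives
\[
\|u_n^{-1}\|=\frac{\|\widetilde{u_n}\|}{|N(u_n)|}\le\frac{C_1\|u_n\|}{c\,\|u_n\|^2}=\frac{M}{\|u_n\|}, \qquad M:=\frac{C_1}{c},
\]
which is the desired inequality. There is no real obstacle; the only minor point to handle carefully is to justify that $C_1$ exists and is intrinsic to $\PP$ (independent of $n$), which follows from linearity of conjugation in finite dimension, and to note that the two norms $\|\cdot\|_m$ and $\|\cdot\|$ on $\R^2$ are equivalent, so the statement is insensitive to which of them one uses when computing $\hat u_n$.
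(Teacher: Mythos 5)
Your proof is correct and is essentially the paper's argument: both rest on the adjugate identity $u_n^{-1}=\widetilde{u_n}/N(u_n)=\operatorname{adj}(L_{u_n})\mathds{1}/\det L_{u_n}$, an upper bound on the conjugate (you via the operator norm of the linear map $x\mapsto\widetilde{x}$, the paper via continuity on the compact sphere after normalizing $u_n$), and the positive-separation lower bound $|N(\hat u_n)|\ge c$. Your version streamlines the rescaling by invoking linearity of $\widetilde{\,\cdot\,}$ and quadratic homogeneity of $N$ directly, but the mechanism is identical.
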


\begin{proof}
Write $u_n=\|u_n\|\,\hat u_n$ with $\hat u_n\in\mathbb{S}^1$. Then
$\|u_n^{-1}\|=\|u_n\|^{-1}\,\|\hat u_n^{-1}\|$.
By definition, positive separation means $|N(\hat u_n)|=\big|\det L_{\hat u_n}\big|\ge c>0$ for all $n$. 
The map $v\mapsto \operatorname{adj}(L_v)\,\mathds{1}$ is continuous on $\mathbb{S}^1$, hence bounded on the (compact) set $\{\hat u_n\}$: there exists $A>0$ with $\|\operatorname{adj}(L_{\hat u_n})\,\mathds{1}\|\le A$. Using
\[
\hat u_n^{-1}=\frac{\operatorname{adj}(L_{\hat u_n})\,\mathds{1}}{\det L_{\hat u_n}},
\]
we obtain $\|\hat u_n^{-1}\|\le A/c=:M_0$. Therefore
\[
\|u_n^{-1}\|=\frac{\|\hat u_n^{-1}\|}{\|u_n\|}\le \frac{M_0}{\|u_n\|},
\]
which proves the claim with $M:=M_0$.
\end{proof}

\begin{cor}\label{cor_direction_coords}
Under the assumptions of Proposition~\ref{prop_direction}, there exists $C>0$ such that
\[
|(u_n)_i\,(u_n^{-1})_j|\;\le\;C\qquad\text{for all $n$ and }i,j\in\{1,2\}.
\]
\end{cor}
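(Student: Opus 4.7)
The plan is to derive the corollary as an immediate consequence of Proposition~\ref{prop_direction} together with the elementary coordinate bound $|v_i|\le\|v\|$ valid for any $v\in\R^2$ in the Euclidean norm. Concretely, for each index $i\in\{1,2\}$ we have $|(u_n)_i|\le \|u_n\|$, and similarly $|(u_n^{-1})_j|\le \|u_n^{-1}\|$. Multiplying these two estimates and substituting the bound $\|u_n^{-1}\|\le M/\|u_n\|$ supplied by Proposition~\ref{prop_direction} yields
\[
|(u_n)_i\,(u_n^{-1})_j|\;\le\;\|u_n\|\cdot\|u_n^{-1}\|\;\le\;\|u_n\|\cdot\frac{M}{\|u_n\|}\;=\;M,
\]
and we simply set $C:=M$.

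There is no serious obstacle here: the corollary is a packaging of the inequality $\|u_n\|\,\|u_n^{-1}\|\le M$, which is the content of Proposition~\ref{prop_direction}, combined with the obvious observation that both $|(u_n)_i|$ and $|(u_n^{-1})_j|$ are dominated by the respective Euclidean norms. The only point worth noting is that the bound is uniform in $n$ and in the indices $i,j$, both of which are inherited directly from the uniform constant $M$ in the preceding proposition. Thus the statement holds with the explicit choice $C=M$, the same constant produced by the positive–separation hypothesis in Proposition~\ref{prop_direction}.
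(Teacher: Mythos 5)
Your proof is correct, and since the paper states the corollary without proof (treating it as immediate), your argument is exactly the intended one: combine the coordinate bound $|v_i|\le\|v\|$ with the inequality $\|u_n\|\,\|u_n^{-1}\|\le M$ from Proposition~\ref{prop_direction} to get the uniform constant $C=M$.
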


We conclude this section with a concrete illustration, showing both how the classical hyperbolic numbers fit into our framework and how sequences of units can degenerate when approaching zero-divisor directions.

\medskip
\noindent\textbf{Example (hyperbolic numbers).}
For $a=(1,0,1)$ and $b=(0,1,0)$, $\PP$ is the algebra of \emph{hyperbolic} (split-complex) numbers (see \cite{Kantor1989} for instance), with
\[
N(x)=x_1^2-x_2^2,\qquad [Z]=\{[(1,1)],\,[(1,-1)]\}.
\]
Let $x(t)=(t,\,t(1-t))$, $t>0$. Then $x(t)\to 0$ and $N(x(t))=t^3(2-t)\neq 0$ for $0<t<2$, but $x_2(t)/x_1(t)\to 1$, so directions approach $[Z]$ and, writing $y(t)=x(t)^{-1}$, one finds $x_1(t)\,y_1(t)=1/(t(2-t))\to\infty$ as $t\to 0^+$.

\medskip
\section{Differentiability in perplex algebras}\label{sec:diff}

We now develop the differential calculus in the setting of perplex algebras.  
The goal is to extend the basic principles of holomorphic function theory to this broader framework, while highlighting the new analytic phenomena that arise in the real context.  
We introduce the notion of $\PP$--differentiability, defined through increments along units separated from zero divisors, and establish the fundamental rules of calculus in this setting.  
Polynomial maps are shown to be $\PP$--differentiable with the expected formulas for their derivatives.  
We then derive the generalized Cauchy--Riemann equations, which characterize differentiability in terms of real partial derivatives and recover the classical complex and hyperbolic cases.  
Finally, we relate the $\PP$--derivative to the real differential, obtaining a natural description of critical points of perplex functions in direct analogy with the holomorphic case.

\medskip
\subsection{Derivative} 

Let $\PP$ be a perplex algebra as above and let $f: \PP \to \PP$ be a function.

\begin{defi} \label{defi_diff}
We say that $f$ is \emph{$\PP$-differentiable} at a point $x \in \PP$ if there exists $L \in \PP$ such that, for every positively separated sequence $(h_n) \subset \PP^\times$ (see Section~\ref{sec:perplex}) converging to the origin, one has
\[
\lim_{n \to \infty} \big( f(x+ h_n) - f(x) \big) * h_n^{-1} = L,
\]
and, as a function of $x$, $L$ is continuous on some neighborhood of $x$.  
In this case, we call $L$ the \emph{derivative} of $f$ at $x$ and denote it by $f'(x)$.  
We say that $f$ is \emph{$\PP$-differentiable} if it is $\PP$-differentiable at every $x \in \PP$.  
\end{defi}

Thus $\PP$–differentiability is defined by testing increments along units separated from zero divisors, with the additional requirement that the resulting derivative depends continuously on the base point.

Whenever there is no risk of misinterpretation, we will simply say that $f$ is differentiable.

It is easy to verify that the usual differentiation rules hold in this setting. Precisely, we have:

\begin{prop} \label{diff_rules}
Let $f,g : \mathbb{P}_{a,b} \to \mathbb{P}_{a,b}$ be differentiable at $x \in \mathbb{P}_{a,b}$.  
Let $c \in \mathbb{P}_{a,b}$ and $\alpha \in \mathbb{R}$. Then:
\begin{itemize}
    \item[$(i)$] The sum $f+g$ is differentiable at $x$ and
    \[
    (f+g)'(x) = f'(x) + g'(x) \, .
    \]
    \item[$(ii)$] The product $f*g$ is differentiable at $x$ and
    \[
    (f*g)'(x) = \big(f'(x) * g(x)\big) + \big(f(x) * g'(x)\big) \, .
    \]
    \item[$(iii)$] The product $c * f$ is differentiable at $x$ and
    \[
    (c * f)'(x) = c * f'(x) \, .
    \]
    \item[$(iv)$] The scalar product $\alpha f$ is differentiable at $x$ and
    \[
    (\alpha f)'(x) = \alpha\, f'(x) \, .
    \]
\end{itemize}
\end{prop}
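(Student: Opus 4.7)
The plan is to reduce each statement to a direct manipulation of the difference quotient $\Delta_n^f:=(f(x+h_n)-f(x))*h_n^{-1}$ from Definition~\ref{defi_diff}, along a positively separated sequence $(h_n)\subset\PP^\times$ with $h_n\to 0$. The bilinearity, commutativity, and associativity of $*$ will do almost all of the algebraic work; the only genuinely analytic input needed is that any $\PP$-differentiable function is continuous along such sequences.

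First I would dispose of (i), (iii), and (iv), which are formal. The identity $((f+g)(x+h_n)-(f+g)(x))*h_n^{-1}=\Delta_n^f+\Delta_n^g$ lets the limit pass through addition, and the same trick handles $\alpha f$ for $\alpha\in\R$. For (iii), commutativity and associativity of $*$ yield $((c*f)(x+h_n)-(c*f)(x))*h_n^{-1}=c*\Delta_n^f\to c*f'(x)$. In all three cases, continuity of the resulting derivative at $x$ follows from continuity of $f'$, $g'$, and of the algebra operations.

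The nontrivial case is (ii). I would use the classical add-and-subtract,
\[
(f*g)(x+h_n)-(f*g)(x)=(f(x+h_n)-f(x))*g(x+h_n)+f(x)*(g(x+h_n)-g(x)),
\]
multiply on the right by $h_n^{-1}$, and use commutativity/associativity to rewrite the result as $\Delta_n^f*g(x+h_n)+f(x)*\Delta_n^g$. Granted $g(x+h_n)\to g(x)$, the limit is the expected $f'(x)*g(x)+f(x)*g'(x)$, and its continuity in $x$ follows from continuity of $f,g,f',g'$ and bilinearity of $*$. The main obstacle is precisely this continuity of $g$ at $x$ along positively separated sequences—a point not in Definition~\ref{defi_diff} but forced by it: writing $g(x+h_n)-g(x)=\Delta_n^g*h_n$ and noting that $\Delta_n^g$ is bounded because it converges to $g'(x)$, Proposition~\ref{prop_norm} gives $\|g(x+h_n)-g(x)\|\le\sqrt{2}K\,\|\Delta_n^g\|\,\|h_n\|\to 0$. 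This short continuity lemma is the only non-formal ingredient; once it is in hand the product rule closes.
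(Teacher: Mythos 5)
The paper gives no proof of Proposition~\ref{diff_rules}; it is dismissed as routine. Your proposal therefore supplies an argument where the paper has none, and the route you take is the natural one: reduce everything to algebraic manipulation of the difference quotients $\Delta_n^f$, $\Delta_n^g$ using bilinearity, commutativity, and associativity of $*$, and then use Proposition~\ref{prop_norm} to make the continuity of $*$ (and hence the passage to the limit) quantitative. Your ``add-and-subtract'' decomposition for (ii) and your observation that the continuity of $g$ at $x$ along positively separated sequences is the only non-formal input are both correct, and the derivation $g(x+h_n)-g(x)=\Delta_n^g*h_n$ with $\Delta_n^g$ bounded is exactly the clean way to extract it from Definition~\ref{defi_diff}.

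One small gap remains, concerning the clause in Definition~\ref{defi_diff} that requires the derivative to depend continuously on the base point. You close this for $(f*g)'(x)=f'(x)*g(x)+f(x)*g'(x)$ by invoking ``continuity of $f,g,f',g'$,'' but continuity of $f$ and $g$ in the ordinary real topology is not a hypothesis, and your short lemma only yields sequential continuity along \emph{positively separated} sequences $h_n\to0$. This suffices to compute the limit of the difference quotient, but not by itself to conclude that $x\mapsto f(x)*g'(x)+f'(x)*g(x)$ is continuous, since arbitrary $k_n\to0$ may approach zero-divisor directions. The cheapest repair is to note that the basis vectors $e_1$, $e_2$ are units (see the discussion before Proposition~\ref{prop_equations}), so taking $h_n=r_n e_i$ with $r_n\to0$ shows that the real partials of $f$ and $g$ exist and are given by $\partial f/\partial x_i = e_i * f'(x)$; these are continuous because $f'$, $g'$ are, hence $f$, $g$ are $C^1$ and in particular continuous. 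With that observation inserted, your proof is complete. The same remark also applies, more mildly, to $(c*f)'$ and $(f+g)'$, where continuity follows at once from continuity of $f'$, $g'$ and bilinearity of $*$, as you say.
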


As an immediate consequence, we have:

\begin{cor}
Let $f \in \PP[x]$ be the polynomial function
\[
f(x) = a_n * x^n + \dots + a_2 * x^2 + a_1 * x + a_0,
\]
with $a_n \neq \begin{bmatrix} 0 \\ 0 \end{bmatrix}$. Then $f$ is differentiable and
\[
f'(x) = n\,a_n * x^{n-1} + \dots + 2\,a_2 * x + a_1 \, .
\]
\end{cor}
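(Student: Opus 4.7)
The plan is to combine the differentiation rules of Proposition~\ref{diff_rules} with an induction on the monomials $x^k$, reducing everything to two base cases that follow directly from Definition~\ref{defi_diff}.

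For the base cases, take first a constant $f(x)=c$: the difference quotient $(f(x+h_n)-f(x))*h_n^{-1}$ is identically zero, so $f'(x)=0$. For the identity $f(x)=x$, the cancellation $h_n*h_n^{-1}=\mathds{1}$ gives $f'(x)=\mathds{1}$. In both cases the derivative is constant in $x$, so the continuity clause of Definition~\ref{defi_diff} is automatically satisfied.

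Next I would prove by induction on $k\ge 1$ that $x\mapsto x^k$ is $\PP$-differentiable with derivative $k\,x^{k-1}$. The case $k=1$ was just handled. For the inductive step I would write $x^{k+1}=x^k*x$ and apply the product rule (ii) of Proposition~\ref{diff_rules} to obtain
\[
(x^{k+1})'=(x^k)'*x+x^k*\mathds{1}=(k\,x^{k-1})*x+x^k=(k+1)\,x^k,
\]
using the inductive hypothesis together with the compatibility of real scalars with $*$.

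The corollary then follows by applying the sum rule (i) and the constant–multiplication rule (iii) to the expansion $f(x)=\sum_{j=0}^n a_j*x^j$: each term differentiates to $j\,a_j*x^{j-1}$, and summing yields the stated formula. Continuity of $f'$ on $\PP$ is automatic since $f'$ is itself a polynomial. I do not expect any essential obstacle: the argument is a textbook induction once the product, sum, and scalar rules of Proposition~\ref{diff_rules} are in hand, and the only potentially delicate point—the continuity clause built into the definition of $\PP$-differentiability—is trivially met at every stage because all intermediate derivatives are polynomial maps, hence continuous in the standard topology on $\PP\cong\R^2$.
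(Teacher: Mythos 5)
Your proposal is correct and is exactly the argument the paper leaves implicit: the corollary is stated in the paper without proof, labeled merely ``an immediate consequence'' of Proposition~\ref{diff_rules}, and the intended reasoning is precisely the induction on $x\mapsto x^k$ via the product rule, combined with the sum and scalar rules, that you spell out. Your two base cases (constants and the identity map) follow directly from Definition~\ref{defi_diff}, your inductive step using $(x^{k+1})' = (x^k)'\ast x + x^k\ast\mathds{1}$ is sound, and you correctly note that the continuity clause in Definition~\ref{defi_diff} is automatic because every intermediate derivative is a polynomial map on $\PP\cong\R^2$.
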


\medskip
\subsection{Generalized Cauchy--Riemann equation}

Let $f : \PP \to \PP$ be a differentiable function, and let 
\[
f(x_1,x_2) = \big( u(x_1,x_2),\, v(x_1,x_2) \big)
\]
be its coordinate representation as a map $\mathbb{R}^2 \to \mathbb{R}^2$.  
Assume also that $f$ is differentiable in the usual real sense, and denote by $u_{x_i}$ and $v_{x_i}$ the corresponding partial derivatives with respect to $x_i$ ($i=1,2$).  

\begin{prop} \label{prop_equations}
If $f : \PP \to \PP$ is differentiable, then for every $x\in\PP$ one has:
\begin{equation} \label{eq_CR2}
f'(x) = \begin{bmatrix} 0 \\ 1 \end{bmatrix}^{-1} * 
\begin{bmatrix} u_{x_2} \\ v_{x_2} \end{bmatrix} ,
\end{equation}
\begin{equation} \label{eq_CR1}
f'(x) = \begin{bmatrix} 1 \\ 0 \end{bmatrix}^{-1} * 
\begin{bmatrix} u_{x_1} \\ v_{x_1} \end{bmatrix} .
\end{equation}
\end{prop}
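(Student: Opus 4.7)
The plan is to extract each identity by specializing the definition of $\PP$--differentiability to axial test sequences of the form $h_n := t_n e_i$, where $e_1 := \begin{bmatrix} 1 \\ 0 \end{bmatrix}$, $e_2 := \begin{bmatrix} 0 \\ 1 \end{bmatrix}$, and $t_n \in \R\setminus\{0\}$ tends to $0$. Since $(t_n e_i)^{-1} = t_n^{-1}\, e_i^{-1}$, the perplex difference quotient in Definition~\ref{defi_diff} factors into a real difference quotient of $f=(u,v)$ along the $x_i$--axis multiplied on the right by the fixed perplex number $e_i^{-1}$, so the coordinate partials $u_{x_i}, v_{x_i}$ should emerge naturally as the limit.

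First I would verify that these test sequences are admissible. By condition (ii) of Proposition~\ref{prop_perplex}, $\det L_{e_1}=a_1b_2-a_2b_1\neq 0$, and combining (i) with (iv) yields $\det L_{e_2}=a_2b_3-a_3b_2=-(a_1a_3-a_2^2)\neq 0$. Hence both $e_1,e_2$ lie in $\PP^\times$ and their inverses exist in $\PP$. Moreover, the Euclidean unit representative $\hat h_n$ along each axial sequence is the constant vector $\pm e_i$, so $|N(\hat h_n)|$ is a positive constant; the sequence is therefore automatically positively separated from $[Z]$ in the sense of Section~\ref{sec:perplex}.

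Next I would apply the definition. For $h_n = t_n e_1$, bilinearity of $*$ gives
\[
\big(f(x+t_n e_1)-f(x)\big) * (t_n e_1)^{-1}
= \Big(\tfrac{1}{t_n}\big(f(x+t_n e_1)-f(x)\big)\Big) * e_1^{-1}.
\]
By the assumed real differentiability of $f$, the bracketed $\R^2$--valued expression converges to $\big(u_{x_1}(x),\,v_{x_1}(x)\big)^\top$. Continuity of the bilinear product $*$ (quantitatively controlled by Proposition~\ref{prop_norm}) transfers the limit through the fixed right factor $e_1^{-1}$, and commutativity of $*$ rewrites the outcome as $e_1^{-1}*\begin{bmatrix} u_{x_1} \\ v_{x_1} \end{bmatrix}$, which is exactly \eqref{eq_CR1}. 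Repeating the argument with $h_n = t_n e_2$ and the inverse $e_2^{-1}$ yields \eqref{eq_CR2}.

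I do not anticipate a genuine obstacle. The substance of the statement is that $\PP$--differentiability enforces the classical link between the algebraic derivative and the real partials, and once the two coordinate directions are shown to be units separated from zero divisors, the rest reduces to the continuity and commutativity of the perplex product. The only mild subtlety is that Definition~\ref{defi_diff} demands the same limit along \emph{every} positively separated sequence; however, the hypothesis supplies this common value $f'(x)$ in advance, so computing it along each of the two axial sequences uniquely identifies it and produces both identities simultaneously.
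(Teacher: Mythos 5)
Your proof is correct and takes essentially the same route as the paper's one-line argument: specialize the defining limit to axial sequences $h_n = t_n e_i$, factor out the scalar $t_n$, and read off the coordinate partials. You supply more detail than the paper, in particular the explicit check (via conditions (i), (ii), (iv) of Proposition~\ref{prop_perplex}) that $e_1, e_2$ are units and that the axial sequences are positively separated from $[Z]$, steps the paper tacitly assumes.
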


These formulas identify the derivative with left multiplication by a vector involving the real partial derivatives, and prepare the way for the generalized Cauchy–Riemann condition.

\begin{proof}
Take $h_r = r\begin{bmatrix}0\\1\end{bmatrix}$, which is a unit in $\PP$.  
Then, by Definition~\ref{defi_diff},
\[
f'(x) = \lim_{r\to 0} \frac{f(x_1, x_2+r)-f(x_1,x_2)}{r} * \begin{bmatrix}0\\1\end{bmatrix}^{-1}
= \begin{bmatrix} u_{x_2} \\ v_{x_2} \end{bmatrix} * \begin{bmatrix}0\\1\end{bmatrix}^{-1},
\]
proving \eqref{eq_CR2}.  
The identity \eqref{eq_CR1} follows analogously with $h_r=r\begin{bmatrix}1\\0\end{bmatrix}$.
\end{proof}

Thus, combining \eqref{eq_CR2} and \eqref{eq_CR1} we obtain:

\begin{cor} \label{cor_CR}
If $f : \PP \to \PP$ is differentiable, then
\begin{equation*} 
\begin{bmatrix}
0 \\ 
1  
\end{bmatrix} *
\begin{bmatrix}
u_{x_1} \\ 
v_{x_1}  
\end{bmatrix}
\;=\;
\begin{bmatrix}
1 \\ 
0  
\end{bmatrix} *
\begin{bmatrix}
u_{x_2} \\ 
v_{x_2}  
\end{bmatrix}.
\end{equation*}
\end{cor}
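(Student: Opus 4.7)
My plan is to obtain the corollary as an immediate algebraic consequence of Proposition \ref{prop_equations} by equating its two expressions for $f'(x)$ and clearing the inverses. Before doing so, I would record the fact that the two standard basis elements $e_1=\begin{bmatrix}1\\0\end{bmatrix}$ and $e_2=\begin{bmatrix}0\\1\end{bmatrix}$ both lie in $\PP^\times$: this is exactly what allowed Proposition \ref{prop_equations} to form $e_1^{-1}$ and $e_2^{-1}$ in the first place. Indeed, condition (ii) of Proposition \ref{prop_perplex} gives $\det L_{e_1}=a_1b_2-a_2b_1\neq 0$, while conditions (i) and (iv) together yield $\det L_{e_2}=a_2b_3-a_3b_2=-(a_1a_3-a_2^2)\neq 0$.

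Next, I would set equal the right-hand sides of \eqref{eq_CR1} and \eqref{eq_CR2}, obtaining
\[
e_1^{-1} * \begin{bmatrix} u_{x_1} \\ v_{x_1} \end{bmatrix} \;=\; e_2^{-1} * \begin{bmatrix} u_{x_2} \\ v_{x_2} \end{bmatrix}.
\]
I would then multiply both sides on the left by $e_1*e_2 \in \PP^\times$. Using commutativity and associativity of $*$, the factor $e_1*e_1^{-1}=\mathds{1}$ collapses on the left-hand side, leaving $e_2*\begin{bmatrix}u_{x_1}\\v_{x_1}\end{bmatrix}$; symmetrically, $e_2*e_2^{-1}=\mathds{1}$ collapses on the right, leaving $e_1*\begin{bmatrix}u_{x_2}\\v_{x_2}\end{bmatrix}$. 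This is exactly the identity asserted in the corollary.

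The argument is essentially tautological and presents no substantive obstacle, since associativity and commutativity are axiomatic for perplex algebras and the invertibility of $e_1,e_2$ is already built into the statement of Proposition \ref{prop_equations}. The only subtlety worth flagging is that commutativity of $*$ is essential when multiplying through: it is needed to pull the factor $e_2$ past $e_1^{-1}$ (and $e_1$ past $e_2^{-1}$) so that the inverse factors can be absorbed into $\mathds{1}$. Without commutativity this reduction would fail, which is why the corollary is a genuinely \emph{perplex}–algebraic phenomenon rather than an artifact of any noncommutative calculus.
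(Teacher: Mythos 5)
Your proof is correct and follows essentially the same route as the paper, which simply states that the corollary is obtained by "combining \eqref{eq_CR2} and \eqref{eq_CR1}." You make explicit the two details the paper elides—that $e_1,e_2\in\PP^\times$ (via conditions (i), (ii), (iv) of Proposition~\ref{prop_perplex}) and that commutativity and associativity of $*$ are what allow the inverses to be cleared after multiplying by $e_1*e_2$—but these are precisely the content of the paper's one-word argument, not a different approach.
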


\begin{defi}
We call the identity in Corollary~\ref{cor_CR} the \emph{generalized Cauchy--Riemann equation}.
\end{defi}

Recall from Definition~\ref{defi_diff} that if $f$ is $\PP$-differentiable, then its derivative $f'$ is continuous.  
As an immediate consequence of Proposition~\ref{prop_equations}, we have:

\begin{cor}
If $f : \mathbb{R}^2 \to \mathbb{R}^2$ is $\PP$-differentiable for some $(a,b) \in \mathcal{P}$, then $f$ is differentiable in the usual real sense.
\end{cor}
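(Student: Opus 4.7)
The plan is to show that $\PP$-differentiability forces both coordinate functions $u$ and $v$ to be of class $C^1$ on $\R^2$, from which Fréchet differentiability in the usual real sense is a classical consequence.

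First I would extract the real partial derivatives. The standard basis vectors $e_1=\binom{1}{0}$ and $e_2=\binom{0}{1}$ are units in $\PP$: their perplex norms are $N(e_1)=a_1b_2-a_2b_1$ and $N(e_2)=-(a_1a_3-a_2^2)$, both nonzero by conditions (i)--(ii) of Proposition~\ref{prop_perplex}. Consequently, the sequences $h_r^{(i)}=r\,e_i$ with $r\to 0$ are automatically positively separated from the zero-divisor cone $[Z]$, since their unit representatives are constant and equal to $e_i$. Substituting these sequences into Definition~\ref{defi_diff} and multiplying the resulting identity on the right by $e_i$ yields
\[
f'(x)*e_i \;=\; \lim_{r\to 0}\frac{f(x+r\,e_i)-f(x)}{r}.
\]
The right-hand side is precisely the componentwise partial derivative $(u_{x_i}(x),v_{x_i}(x))^{T}$, so all four partials exist at every $x\in\PP$.

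Next I would promote these partials to \emph{continuous} partials. By Definition~\ref{defi_diff}, the map $x\mapsto f'(x)$ is continuous on $\PP$; since left multiplication by $e_i$ is a fixed $\R$-linear (hence continuous) operation, the composite $x\mapsto f'(x)*e_i$ is again continuous, and the identification above transports this to continuity of $(u_{x_i},v_{x_i})$. Hence $u$ and $v$ belong to $C^1(\R^2,\R)$, and the classical criterion ``continuous partial derivatives imply Fréchet differentiability'' closes the argument.

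The only conceptual subtlety I anticipate is that $\PP$-differentiability, by itself, supplies a linear approximation of the increment only along units separated from the zero-divisor cone; this is strictly weaker than Fréchet differentiability, which demands uniform control along all directions. The continuity of $f'$ built into Definition~\ref{defi_diff} is precisely what upgrades pointwise existence of the partials to $C^1$ regularity, and the $C^1$ criterion then restores the uniform approximation across all directions, including those that graze the zero-divisor cone.
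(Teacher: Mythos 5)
Your proof is correct and takes essentially the same route as the paper: extract the partial derivatives by testing $\PP$-differentiability along the sequences $r e_i$ (which are positively separated since $e_1, e_2 \in \PP^\times$), observe that the built-in continuity of $f'$ transfers to continuity of the partials, and invoke the classical criterion that $C^1$ implies Fréchet differentiability. The paper reaches the same conclusion via Proposition~\ref{prop_equations} together with the remark that $f'$ is continuous by Definition~\ref{defi_diff}; your write-up is in fact slightly cleaner, since it avoids the apparent circularity of the paper's standing assumption that $f$ is already differentiable in the real sense before Proposition~\ref{prop_equations}.
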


\begin{remark}
In the special case where $\PP=\mathbb{C}$, the generalized Cauchy--Riemann equation in Corollary~\ref{cor_CR} reduces exactly to the classical Cauchy--Riemann equations
\[
u_{x_1} = v_{x_2}, \qquad u_{x_2} = -v_{x_1}.
\]
Thus, the generalized equation recovers the usual holomorphicity condition in complex analysis.
\end{remark}

\begin{example}[Hyperbolic case]
Consider $\PP$ with parameters $a = (1,0,1)$ and $b = (0,1,0)$. If $f = (u,v)$ is $\PP$-differentiable, Corollary~\ref{cor_CR} becomes
\[
\begin{bmatrix} 0 \\ 1 \end{bmatrix} *
\begin{bmatrix} u_{x_1} \\ v_{x_1} \end{bmatrix}
=
\begin{bmatrix} 1 \\ 0 \end{bmatrix} *
\begin{bmatrix} u_{x_2} \\ v_{x_2} \end{bmatrix},
\]
which, using the multiplication rule above, is equivalent to the linear system
\[
\begin{cases}
u_{x_1} = v_{x_2}, \\
u_{x_2} = v_{x_1}.
\end{cases}
\]
\end{example}

The results so far show that $\PP$--differentiability implies the genera\-lized Cauchy--Riemann equation.  
Conversely, we now prove that this condition is also sufficient, giving a complete characterization of $\PP$--differentiability.

\begin{theo} \label{theo_GCR}
Let $f:\R^2\to\R^2$ be a $C^1$ map and let $(a,b)\in\mathcal P$. Then $f$ is $\PP$-differentiable if and only if the corresponding generalized Cauchy--Riemann equation holds.
\end{theo}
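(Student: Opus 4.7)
The forward implication is already established by Proposition~\ref{prop_equations} and Corollary~\ref{cor_CR}, so the real work is the converse. My plan is to build an explicit candidate derivative $L(x)\in\PP$ from the partial derivatives of $f$, use the generalized Cauchy--Riemann equation to realize the real Jacobian $Df(x)$ as left perplex multiplication by $L(x)$, and then control the Taylor remainder by combining Propositions~\ref{prop_norm} and~\ref{prop_direction}.

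First I would take as candidate
\[
L(x):=\begin{bmatrix}1\\0\end{bmatrix}^{-1} * \begin{bmatrix} u_{x_1}(x)\\ v_{x_1}(x)\end{bmatrix},
\]
which is well-defined because condition (ii) of Proposition~\ref{prop_perplex} makes $e_1:=\begin{bmatrix}1\\0\end{bmatrix}$ a unit, and which depends continuously on $x$ since $f$ is $C^1$. The key algebraic step is to verify $Df(x)\,h = L(x)*h$ for every $h\in\R^2$. Both sides are $\R$-linear in $h$, so it suffices to test on the standard basis. The case $h=e_1$ is immediate from associativity and yields the first column of $Df(x)$. The case $h=e_2$ is precisely where the hypothesis enters: rearranging $L(x)*e_2$ by commutativity and associativity, Corollary~\ref{cor_CR} lets me replace the interior factor $e_2*(u_{x_1},v_{x_1})$ by $e_1*(u_{x_2},v_{x_2})$, after which $e_1^{-1}*e_1$ cancels and the expression collapses to the second column of $Df(x)$.

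With this identification in hand, the $C^1$ Taylor expansion reads
\[
f(x+h)-f(x) = L(x)*h + r(h), \qquad \|r(h)\|=o(\|h\|),
\]
and for any $h\in\PP^\times$ right–multiplication by $h^{-1}$ gives $(f(x+h)-f(x))*h^{-1} = L(x) + r(h)*h^{-1}$. Given any sequence $(h_n)\subset\PP^\times$ converging to $0$ and positively separated from $[Z]$, Proposition~\ref{prop_direction} provides $\|h_n^{-1}\|\le M/\|h_n\|$, and the submultiplicative bound of Proposition~\ref{prop_norm} then yields $\|r(h_n)*h_n^{-1}\|\le \sqrt{2}\,K\,M\,\|r(h_n)\|/\|h_n\|\to 0$. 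Thus the limit equals the continuous function $L(x)$, proving $\PP$-differentiability with $f'(x)=L(x)$.

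The principal obstacle is precisely this last remainder estimate: along sequences whose directions approach the zero-divisor cone $[Z]$, $\|h^{-1}\|$ can blow up strictly faster than $1/\|h\|$, as already illustrated by the hyperbolic example at the end of Section~\ref{sec:perplex}, so $r(h)*h^{-1}$ need not vanish and a naive limit would fail to exist. The positive-separation clause built into Definition~\ref{defi_diff} is exactly what triggers Proposition~\ref{prop_direction} and rescues the estimate; it is this delicate interaction between the $C^1$ remainder and the geometry of zero divisors that makes the converse direction nontrivial and explains why the notion of $\PP$-differentiability must incorporate that restriction.
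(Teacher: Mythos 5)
Your proof is correct and follows essentially the same route as the paper: define a candidate derivative from the partial derivatives, use the generalized Cauchy--Riemann equation to identify the real Jacobian with left perplex multiplication by that candidate, and kill the Taylor remainder via Propositions~\ref{prop_norm} and~\ref{prop_direction}. The only cosmetic difference is that the paper encodes the GCR hypothesis directly into the well-definedness of its candidate $w(x)$ (written simultaneously as $e_1^{-1}*(u_{x_1},v_{x_1})$ and $e_2^{-1}*(u_{x_2},v_{x_2})$), whereas you invoke Corollary~\ref{cor_CR} explicitly in the basis computation for $h=e_2$; the mathematical content is identical.
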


\begin{proof}
Assume $f=(u,v)$ is $C^1$ and satisfies the generalized Cauchy--Riemann equation. Define, for each $x$, the continuous function
\[
w(x)\;:=\;\begin{bmatrix}1\\0\end{bmatrix}^{-1} * \begin{bmatrix} u_{x_1}(x)\\ v_{x_1}(x)\end{bmatrix}
\;=\;\begin{bmatrix}0\\1\end{bmatrix}^{-1} * \begin{bmatrix} u_{x_2}(x)\\ v_{x_2}(x)\end{bmatrix}.
\]

\smallskip
\emph{Step 1: $Df(x)=L_{w(x)}$ as linear maps $\R^2\to\R^2$.}
Since $f$ is $C^1$, its real differential $Df(x)$ satisfies
\[
Df(x)\,e_i=\begin{bmatrix}u_{x_i}(x)\\ v_{x_i}(x)\end{bmatrix}, \qquad i=1,2,
\]
where $e_1=\begin{bmatrix}1\\0\end{bmatrix}$ and $e_2=\begin{bmatrix}0\\1\end{bmatrix}$. By the definition of $w(x)$,
\[
\begin{bmatrix}u_{x_1}\\ v_{x_1}\end{bmatrix}=e_1 * w(x), 
\qquad
\begin{bmatrix}u_{x_2}\\ v_{x_2}\end{bmatrix}=e_2 * w(x).
\]
Because $*$ is commutative, $L_{w(x)}(e_i)=w(x)*e_i=e_i*w(x)$, hence
\[
Df(x)\,e_i = L_{w(x)}(e_i)\quad (i=1,2).
\]
Therefore $Df(x)=L_{w(x)}$.

\smallskip
\emph{Step 2: Limit along positively separated sequences.}
Let $(h_n)\subset\PP^\times$ be any positively separated sequence with $h_n\to 0$.  
As a real map,
\[
f(x+h_n)-f(x) = Df(x)\,h_n + r(h_n) = L_{w(x)}(h_n) + r(h_n),
\]
where $r(h)$ is the remainder term in the first-order expansion, satisfying
\[
\frac{\|r(h)\|}{\|h\|} \to 0 \quad \text{as} \quad h \to 0.
\]
Hence
\[
\big(f(x+h_n)-f(x)\big)*h_n^{-1} 
= \big(L_{w(x)} h_n\big)*h_n^{-1} \,+\, r(h_n)*h_n^{-1}
= w(x) + r(h_n)*h_n^{-1},
\]
using associativity and $L_{w(x)}h_n=w(x)*h_n$.

By Proposition~\ref{prop_norm}, there exists $C_* > 0$ such that
\[
\|r(h_n)*h_n^{-1}\| \;\le\; C_* \,\|r(h_n)\|\,\|h_n^{-1}\|.
\]
Write $h_n = r_n u_n$ with $r_n\in\R^\times$ and $\|u_n\|=1$.  
Positive separation from $[Z]$ implies $\|u_n^{-1}\|\le M$ for some $M>0$ (see Proposition \ref{prop_direction}). Then
\[
\|h_n^{-1}\| = \frac{\|u_n^{-1}\|}{|r_n|} \le \frac{M}{\|h_n\|}.
\]
Therefore,
\[
\|r(h_n)*h_n^{-1}\| \le \frac{C_* M \,\|r(h_n)\|}{\|h_n\|} \to 0.
\]

\smallskip
We conclude that
\[
\lim_{n\to\infty} \big(f(x+h_n)-f(x)\big)*h_n^{-1} = w(x),
\]
independently of the positively separated sequence $(h_n)$.  
Since $w$ is continuous, $f$ is $\PP$-differentiable at $x$ with $f'(x)=w(x)$. This completes the proof.
\end{proof}

\begin{example} \label{example_complex_conjugate}
The complex conjugation map 
\[
f(x_1,x_2) = (x_1,\,-x_2)
\]
is not $\PP$-differentiable for any $(a,b) \in \mathcal{P}$, since the generalized Cauchy--Riemann equation fails to hold in every case.
\end{example}

As consequences of Theorem \ref{theo_GCR} we have:

\begin{cor}
If $f: \PP \to \PP$ is differentiable and $f'(x) = 0$ for every $x$ in some open neighborhood $U \subset \PP$, then $f$ is constant on $U$.
\end{cor}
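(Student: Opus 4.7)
The plan is to reduce the statement to the classical fact that a $C^1$ real map with vanishing differential on a connected open set is constant, using the bridge between $\PP$-differentiability and ordinary real differentiability established in Proposition~\ref{prop_equations} and Theorem~\ref{theo_GCR}.

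First I would invoke Proposition~\ref{prop_equations}, which gives the two representations
\[
f'(x) = e_1^{-1} * \begin{bmatrix} u_{x_1}(x) \\ v_{x_1}(x) \end{bmatrix},
\qquad
f'(x) = e_2^{-1} * \begin{bmatrix} u_{x_2}(x) \\ v_{x_2}(x) \end{bmatrix}.
\]
Multiplying both identities on the left by $e_1$ and $e_2$ respectively (these are units by Proposition~\ref{prop_perplex}) and using the hypothesis $f'(x)=0$ on $U$, we obtain that the vectors $(u_{x_1},v_{x_1})$ and $(u_{x_2},v_{x_2})$ vanish identically on $U$. Hence the real Jacobian $Df(x)$ is the zero map for every $x\in U$.

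Next I would apply the corollary stating that any $\PP$-differentiable map is differentiable in the usual real sense, so the classical mean value inequality applies to $f=(u,v)$. Since $Df\equiv 0$ on $U$, the map $f$ is locally constant on $U$, and therefore constant on each connected component of $U$. (If one reads the statement as $U$ being a connected neighborhood, as is customary, this yields the constancy of $f$ on $U$ directly.)

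There is no serious obstacle here; the only subtle point is remembering that the conclusion of $\PP$-differentiability feeds back into ordinary real $C^1$-regularity, so that one is entitled to use the standard mean value argument. Everything else is a direct algebraic consequence of Proposition~\ref{prop_equations} together with the invertibility of $e_1$ and $e_2$.
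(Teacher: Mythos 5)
Your proof is correct and matches the route the paper intends: the corollary is stated as a consequence of the identification $Df(x)=L_{f'(x)}$ (established in Step~1 of the proof of Theorem~\ref{theo_GCR}, and equivalently readable off from Proposition~\ref{prop_equations} since $e_1,e_2\in\PP^\times$), which forces $Df\equiv 0$ on $U$ and hence $f$ locally constant. Your remark that one must read $U$ as connected to conclude global constancy is a valid and correctly handled subtlety.
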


\begin{cor} \label{cor_cp}
    Let $f: \PP \to \PP$ be a differentiable function.  
    Then $x$ is a critical point of $f$ (in the usual sense, as a map $\R^2 \to \R^2$) if and only if $f'(x) \in \PP \setminus \PP^\times$.
\end{cor}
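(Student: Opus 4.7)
\medskip

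The plan is to reduce the statement to a chain of equivalences using the algebraic interpretation of the real differential of a $\PP$--differentiable map and the characterization of units via the perplex norm. Both ingredients are already at hand.

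First, I would recall that $x$ is a critical point of $f:\R^2\to\R^2$ in the usual real sense precisely when the real differential $Df(x):\R^2\to\R^2$ fails to be invertible, i.e.\ $\det Df(x)=0$. Next, I would invoke Step~1 of the proof of Theorem~\ref{theo_GCR}, which identifies the real differential with left multiplication by the $\PP$--derivative:
\[
Df(x) \;=\; L_{f'(x)}.
\]
This identification is the crucial structural fact; the rest is purely algebraic.

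With this in place, the definition of the perplex norm immediately gives
\[
\det Df(x) \;=\; \det L_{f'(x)} \;=\; N\bigl(f'(x)\bigr).
\]
Finally, I would invoke the characterization stated after the definition of $N$, namely that $v\in\PP^\times$ if and only if $N(v)\neq 0$. Chaining these equivalences yields
\[
x \text{ is critical } \Longleftrightarrow\ \det Df(x)=0 \ \Longleftrightarrow\ N(f'(x))=0 \ \Longleftrightarrow\ f'(x)\in \PP\setminus \PP^\times,
\]
which is the claim.

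There is no real obstacle here: every ingredient — the identification $Df=L_{f'}$, the formula $\det L_v=N(v)$, and the unit criterion $N(v)\neq 0$ — has already been established. The only point that deserves a sentence of emphasis is that the identification $Df(x)=L_{f'(x)}$ is exactly what allows the real-analytic notion of criticality to be translated into a purely algebraic condition inside $\PP$, and that the resulting locus $\PP\setminus\PP^\times$ is precisely the conic of zero divisors described after Proposition~\ref{prop_perplex}.
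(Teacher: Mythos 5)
Your proof is correct and follows essentially the same route as the paper: both rest on the identification $Df(x)=L_{f'(x)}$ from Theorem~\ref{theo_GCR} together with the fact that $L_v$ is invertible iff $N(v)=\det L_v\neq 0$ iff $v\in\PP^\times$. The only (immaterial) slip is in your closing aside: $\PP\setminus\PP^\times$ is the full conic $\{N=0\}$, which contains the origin, whereas the zero--divisor set $Z$ is defined in the paper with $0$ excluded.
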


\begin{proof}
Recall that $y \in \PP^\times$ if and only if its perplex norm $N(y) \neq 0$, which is also equivalent to $L_y$ being invertible, where $L_{y}$ denotes left multiplication by $y \in \PP$. By Theorem~\ref{theo_GCR}, if $f$ is $\PP$-differentiable then its real differential satisfies
\[
Df(x) = L_{f'(x)} : \R^2 \to \R^2.
\]
In particular, $Df(x)$ is invertible if and only if $L_{f'(x)}$ is invertible. Since $L_{y}$ is invertible if and only if $y \in \PP^\times$, we have:
\[
Df(x) \ \text{invertible} \quad \Longleftrightarrow \quad f'(x) \in \PP^\times.
\]
Therefore $Df(x)$ fails to be invertible if and only if $f'(x) \in \PP \setminus \PP^\times$. This proves the claim.
\end{proof}

This provides a natural extension of the notion of critical point from holomorphic maps to the wider perplex analytic setting.

\section{Perplex functions and approximation}\label{sec:approx}

In this section, we investigate the class of maps $\R^2 \to \R^2$ that are $\PP$--differentiable for some $(a,b)\in\mathcal P$, which we call \emph{perplex functions}. They are the natural analogue of holomorphic maps in this framework. A key feature is their rigidity: for a fixed algebra $\PP$, the generalized Cauchy--Riemann (GCR) equation severely restricts the class of functions.  
In particular, it is not dense among polynomial maps.  
However, if $(a,b)$ is allowed to vary, a rich approximation theory emerges in the linear and quadratic cases.

\begin{defi}
A $C^1$ map $f:\R^2\to\R^2$ is a \emph{perplex function} if there exists $(a,b)\in\mathcal P$ such that $f$ is $\PP$--differentiable.
\end{defi}

Equivalently, $f=(u,v)$ is a perplex function if and only if its derivatives satisfy
\begin{equation}\label{eq:matrix_GCR}
\begin{pmatrix} a_2 & a_3 \\ b_2 & b_3 \end{pmatrix}
\binom{u_{x_1}}{v_{x_1}}
=
\begin{pmatrix} a_1 & a_2 \\ b_1 & b_2 \end{pmatrix}
\binom{u_{x_2}}{v_{x_2}} 
\end{equation}
for some $(a,b)\in\mathcal P$. Thus $\PP$--differentiability reduces to linear relations among partial derivatives.  
This rigidity prevents approximation when $(a,b)$ is fixed.

\begin{example}\label{example_approx1}
Let $f(x_1,x_2)=(x_1,-x_2)$.  
Perturbations $f_{\alpha,\beta}(x_1,x_2)=(x_1+\alpha x_2,\;\beta x_1-x_2)$ satisfy \eqref{eq:matrix_GCR} only for special choices of $(\alpha,\beta)$ depending on $(a,b)$.  
If $(a,b)$ is fixed, no sequence with $\alpha,\beta\to 0$ can make $f_{\alpha,\beta}$ $\PP$--differentiable, since $f$ itself is not (Example~\ref{example_complex_conjugate}).  
Hence polynomial maps cannot in general be approximated by $\PP$--functions with $(a,b)$ fixed.
\end{example}

Allowing $(a,b)$ to vary along the approximating sequence remedies this obstruction.  

\begin{theo}\label{thm:linear_perplex}
Let $L:\R^2\to\R^2$ be linear with matrix $J=\begin{pmatrix}p&r\\ q&s\end{pmatrix}$. Then:
\begin{itemize}
\item[(i)] For a Zariski-open dense set of $J$, there exists $(a,b)\in\mathcal P$ making $L$ $\PP$--differentiable.
\item[(ii)] For any $J$, there exist $(a_n,b_n)\in\mathcal P$ and $\mathbb{P}_{a_n,b_n}$--differentiable linear maps $L_n$ with $L_n\to L$ in the Whitney topology.  
Hence every linear map is a limit of polynomial perplex functions.
\end{itemize}
\end{theo}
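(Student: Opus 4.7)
The plan is to translate $\PP$-differentiability into a linear-algebraic condition on the Jacobian $J$ and then exhibit explicit valid $(a,b)\in\mathcal{P}$ for generic $J$. By Theorem~\ref{theo_GCR}, a linear $L$ with matrix $J=\begin{pmatrix}p & r\\ q & s\end{pmatrix}$ is $\PP$-differentiable if and only if $J$ lies in the image of $\PP\hookrightarrow M_2(\R)$ by left multiplication, which is the $\R$-span of $A:=L_{e_1}$ and $B:=L_{e_2}$. Since $I=L_{\mathds{1}}$ lies in this span and any 2-dimensional commutative subalgebra of $M_2(\R)$ containing both $I$ and a non-scalar $J$ equals $\R[J]$, the problem reduces to realizing $A,B$ as an appropriately-structured basis $A=\alpha_1 I+\beta_1 J$, $B=\alpha_2 I+\beta_2 J$ of $\R[J]$. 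Matching the shared entries $a_2=(A)_{12}=(B)_{11}$ and $b_2=(A)_{22}=(B)_{21}$ forces $\alpha_1=\beta_2 q-\beta_1 s$ and $\alpha_2=\beta_1 r-\beta_2 p$, which yields the explicit parametrization
\[
a=(\beta_2 q+\beta_1(p-s),\,\beta_1 r,\,\beta_2 r),\qquad
b=(\beta_1 q,\,\beta_2 q,\,\beta_1 r-\beta_2(p-s)).
\]

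The main computational step is to substitute these expressions into conditions (iii) and (iv) of Proposition~\ref{prop_perplex} and verify that both vanish identically in $(\beta_1,\beta_2)$, and to compute $\det A=qK$ and $\det B=-rK$, where $K(\beta_1,\beta_2):=q\beta_2^2+(p-s)\beta_1\beta_2-r\beta_1^2$. For item~(i), I would take $U:=\{J : qr\neq 0\}$, which is Zariski-open and dense in $\R^{2\times 2}$; for every $J\in U$ the quadratic form $K$ is not identically zero (taking $\beta_1=1$, $\beta_2=0$ gives $K=-r\neq 0$), so one can select $(\beta_1,\beta_2)$ with $K\neq 0$, producing $(a,b)\in\mathcal{P}$ that satisfies all four conditions and makes $L$ $\PP$-differentiable. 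The scalar case $J=cI$ is trivial, as $L=c\cdot L_{\mathds{1}}$ for any valid $(a,b)$.

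For item~(ii), given an arbitrary $J$, set $J_n:=J+\tfrac{1}{n}\begin{pmatrix}0 & 1\\ 1 & 0\end{pmatrix}$, so that $q_nr_n=(q+\tfrac{1}{n})(r+\tfrac{1}{n})\neq 0$ for all sufficiently large $n$. Applying the previous construction to each $J_n$ produces $(a_n,b_n)\in\mathcal{P}$ making the linear map $L_n$ with matrix $J_n$ $\mathbb{P}_{a_n,b_n}$-differentiable, and since linear maps have vanishing higher derivatives, the matrix-norm convergence $J_n\to J$ immediately yields Whitney convergence $L_n\to L$. The main obstacle is the signed-cancellation verification that (iii) and (iv) hold identically under the parametrization; a secondary conceptual subtlety is that for non-scalar $J$ with $qr=0$ the parametrization genuinely degenerates (forcing $a_2=a_3=0$ if $r=0$, or $b_1=b_2=0$ if $q=0$, violating (i) or (ii) of Proposition~\ref{prop_perplex}), so pointwise realizability really does fail on this locus and the perturbation argument is essential rather than cosmetic.
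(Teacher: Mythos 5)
Your proof is correct and takes a genuinely different, more explicit route than the paper's. The paper treats the generalized Cauchy--Riemann condition for $L$ as two linear equations in $(a,b)$, combines them with the two equality constraints defining $\mathcal P$, and concludes by an informal dimension count that the resulting system is solvable for generic $J$; part~(ii) is then handled by perturbing into the generic locus, just as you do. Your argument instead reformulates $\PP$-differentiability of $L$ as the requirement that $J$ lie in the image $\{L_x : x\in\PP\}\subset M_2(\R)$ (this is exactly Step~1 of the proof of Theorem~\ref{theo_GCR}), which for non-scalar $J$ forces that image to equal $\R[J]=\operatorname{span}(I,J)$ and hence forces $A=L_{e_1}$ and $B=L_{e_2}$ to lie in $\R[J]$. (Note that conditions (i)--(ii) of Proposition~\ref{prop_perplex} automatically make $A,B$ linearly independent, so $A,B\in\R[J]$ is equivalent to $\operatorname{span}(A,B)=\R[J]$; you use this implicitly and it is fine.) This yields the two-parameter family $(a,b)(\beta_1,\beta_2)$ you write down, for which (iii)--(iv) vanish identically while (i)--(ii) reduce to $rK\neq 0$ and $qK\neq 0$ with $K=q\beta_2^2+(p-s)\beta_1\beta_2-r\beta_1^2$; the identities $a_1a_3-a_2^2=rK$, $\det A=qK$, $\det B=-rK$ all check out, and $(\beta_1,\beta_2)=(1,0)$ gives $K=-r\neq0$ on $\{qr\neq0\}$. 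What your approach buys, beyond replacing the hand-waved dimension count by an explicit solution, is a precise description of the realizable locus: the Zariski-open set $\{qr\neq 0\}$ together with scalar matrices, and the complementary observation that non-scalar $J$ with $qr=0$ admit no valid $(a,b)$ at all --- which the paper's argument cannot see, and which shows the perturbation in part~(ii) is genuinely necessary rather than a formality.
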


\begin{proof}
For $L=(u,v)$, the GCR equation becomes
\[
a_2 p+a_3 q=a_1 r+a_2 s,\qquad b_2 p+b_3 q=b_1 r+b_2 s,
\]
together with the defining relations of $\mathcal P$.  
This system has solutions for generic $(p,q,r,s)$ by dimension count, yielding (i).  
For (ii), approximate any $J$ by a sequence $J_n$ in the generic set and solve for $(a_n,b_n)$; the corresponding maps $L_n$ converge to $L$.
\end{proof}

\begin{example}\label{example_conj}
The conjugation map $f(x_1,x_2)=(x_1,-x_2)$ is not $\PP$--differentiable for any fixed $(a,b)$.  
But Theorem~\ref{thm:linear_perplex}(ii) provides sequences $(a_n,b_n)$ and polynomial maps $f_n$ with $f_n\to f$.  
Thus the obstruction in Example~\ref{example_approx1} disappears once $(a,b)$ is allowed to vary.
\end{example}

Together, Examples~\ref{example_approx1} and~\ref{example_conj} show that varying $(a,b)$ is essential for a viable approximation theory.

\medskip
\noindent\textbf{Quadratic maps.}
For quadratic maps, the situation is subtler. Writing
\[
\binom{u_{x_1}}{v_{x_1}}=m_0+m_1 x_1+m_2 x_2,\qquad
\binom{u_{x_2}}{v_{x_2}}=n_0+n_1 x_1+n_2 x_2,
\]
with $m_k,n_k\in\R^2$, the GCR equations yield:

\begin{prop}[Quadratic characterization]\label{prop:quad_char}
A quadratic map $f$ is a perplex function iff there exists $T\in M_2(\R)$ constant such that
\[
n_k=T\,m_k\qquad(k=0,1,2).
\]
Equivalently, $\binom{u_{x_2}}{v_{x_2}}=T\,\binom{u_{x_1}}{v_{x_1}}$ for all $x$.
\end{prop}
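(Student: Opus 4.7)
The plan is to reduce the generalized Cauchy--Riemann equation for a quadratic map to a finite system of vector identities, and then extract a single matrix $T$ relating the coefficient vectors. First I rewrite \eqref{eq:matrix_GCR} in the compact matrix form
\[
B \binom{u_{x_1}}{v_{x_1}} = A \binom{u_{x_2}}{v_{x_2}}, \qquad A := \begin{pmatrix} a_1 & a_2 \\ b_1 & b_2 \end{pmatrix}, \quad B := \begin{pmatrix} a_2 & a_3 \\ b_2 & b_3 \end{pmatrix}.
\]
By condition (ii) of Proposition~\ref{prop_perplex}, $\det A = a_1 b_2 - a_2 b_1 \neq 0$, so setting $T := A^{-1} B$ the equation is equivalent to $\binom{u_{x_2}}{v_{x_2}} = T \binom{u_{x_1}}{v_{x_1}}$. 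Since $f$ is quadratic, both sides are affine in $(x_1, x_2)$, so the identity holds for all $x$ iff the coefficients of $1$, $x_1$, and $x_2$ agree separately, yielding $n_k = T m_k$ for $k = 0, 1, 2$. This simultaneously establishes both the componentwise formulation and the equivalent matrix formulation stated in the proposition.

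For the forward direction, if $f$ is $\PP$--differentiable for some $(a,b) \in \mathcal{P}$, then $T := A^{-1} B$ is the required choice. For the converse, given $T$ with $n_k = T m_k$, I would construct $(a,b) \in \mathcal{P}$ whose associated matrices satisfy $(B - A T) m_k = 0$ for $k = 0, 1, 2$. A direct calculation of $A^{-1} B$ using (i)--(iv) of Proposition~\ref{prop_perplex} shows that it always has the companion form $\begin{pmatrix} 0 & \tau_1 \\ 1 & \tau_2 \end{pmatrix}$ with $\tau_1 = (a_1 a_3 - a_2^2)/(a_1 b_2 - a_2 b_1) \neq 0$. When $T$ already has this shape, a convenient realization is $a = (1, 0, \tau_1)$ and $b = (0, 1, \tau_2)$, which I would verify against all four perplex conditions; these reduce to $\tau_1 \neq 0$ and are therefore automatically satisfied.

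The delicate step is the converse when $\{m_0, m_1, m_2\}$ fails to span $\R^2$: in this case $T$ is not uniquely determined by $n_k = T m_k$, and one must show that among the admissible matrices there is a companion-form representative agreeing with the given $T$ on $\operatorname{span}\{m_k\}$. The freedom of $T$ off this span is precisely what is needed to extract such a companion form, after which the explicit construction above closes the argument. I expect this span-reduction case to be the main obstacle, since it is where the algebraic rigidity of $A^{-1} B$ imposed by the perplex conditions needs to be reconciled with a possibly non-companion input $T$.
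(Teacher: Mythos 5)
Your forward direction is the same as the paper's (set $T$ equal to the ratio of the two coefficient matrices; the invertibility of $\begin{psmallmatrix}a_1&a_2\\b_1&b_2\end{psmallmatrix}$ is condition (ii) of Proposition~\ref{prop_perplex}), and that part is correct. Your computation that, for every $(a,b)\in\mathcal P$, the matrix $T$ necessarily has the companion form $\begin{psmallmatrix}0&\tau_1\\1&\tau_2\end{psmallmatrix}$ with $\tau_1\neq 0$ is also correct; this follows from the shared column between the two matrices and from conditions (i) and (iv). Your explicit realization $a=(1,0,\tau_1)$, $b=(0,1,\tau_2)$ does satisfy (i)--(iv) when $\tau_1\neq 0$, so whenever $T$ already has this shape the converse goes through. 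The paper's own converse is actually \emph{less} careful than yours: it just says ``choose any $B\in\mathrm{GL}_2(\R)$, set $A:=BT$, and solve the defining equalities,'' without noticing that $A$ and $B$ share a column, which forces $Te_1=e_2$ and thereby constrains $T$ to companion form from the start.

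However, you have misdiagnosed where the gap bites. You flag the case in which $\{m_0,m_1,m_2\}$ fails to span $\R^2$ as the delicate one, because there $T$ is not unique. In fact that is the \emph{favorable} case: nonuniqueness is exactly the freedom you need to replace a given $T$ by a companion-form representative agreeing with it on $\operatorname{span}\{m_k\}$. The fatal case is the opposite one, when $\{m_k\}$ \emph{does} span $\R^2$: then $T$ is uniquely determined by the data $n_k=Tm_k$, there is nothing left to adjust, and if that unique $T$ is not of companion form the conclusion simply fails. This is not a hypothetical: take $u_{x_1}=u_{x_2}=1$, $v_{x_1}=v_{x_2}=x_1+x_2$ (e.g.\ $f(x_1,x_2)=\big(x_1+x_2,\ \tfrac12(x_1+x_2)^2\big)$), which is a genuine quadratic map with integrable partials. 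Here $m_0=n_0=(1,0)$ and $m_1=m_2=n_1=n_2=(0,1)$, so $\{m_k\}$ spans, the only matrix with $n_k=Tm_k$ is $T=I$, and $I$ is not companion form ($Ie_1\neq e_2$). Forcing $A=B$ gives $a_1=a_2=a_3$ and $b_1=b_2=b_3$, which violates (i), and the exceptional branch $a_1=b_2\neq0$, $a_2=b_1=0$ fails similarly. So this $f$ satisfies the stated hypothesis of the proposition with $T=I$ but is not a perplex function. In short: your companion-form observation is correct and exposes a real problem, but the spanning case --- not the degenerate one --- is where the argument (and, as stated, the proposition itself) breaks, and no span-reduction trick can repair it.
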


\begin{proof}[Proof of Proposition~\ref{prop:quad_char}]
If $f$ is $\PP$--differentiable, the GCR equation reads 
\[
A\binom{u_{x_1}}{v_{x_1}}=B\binom{u_{x_2}}{v_{x_2}},\qquad
A=\begin{pmatrix}a_2&a_3\\ b_2&b_3\end{pmatrix},\ 
B=\begin{pmatrix}a_1&a_2\\ b_1&b_2\end{pmatrix}.
\]
Since $B$ is invertible, setting $T:=B^{-1}A$ yields 
$\binom{u_{x_2}}{v_{x_2}}=T\binom{u_{x_1}}{v_{x_1}}$, hence $T m_k=n_k$ for $k=0,1,2$.  
Conversely, given such a constant $T$, choose any $B\in\mathrm{GL}_2(\R)$, set $A:=BT$, and solve the two defining equalities of $\mathcal P$ by adjusting $B$ (the open inequalities can be met by a small perturbation).  
Thus a valid $(a,b)\in\mathcal P$ exists, and Theorem~\ref{theo_GCR} gives the claim.
\end{proof}

\begin{example}
The map $g(x_1,x_2)=(x_1^2,x_2^2)$ does not satisfy the condition of Proposition~\ref{prop:quad_char}, hence is not a perplex function.
\end{example}

\begin{cor}
Quadratic perplex functions are dense among quadratic maps in the Whitney topology.
\end{cor}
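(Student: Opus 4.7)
The plan is to invoke Proposition~\ref{prop:quad_char}: a quadratic map $f=(u,v)$ is a perplex function if and only if there exists a constant matrix $T\in M_2(\R)$ with $\binom{u_{x_2}}{v_{x_2}}=T\binom{u_{x_1}}{v_{x_1}}$. Expanding the affine partials as $m_0+m_1 x_1+m_2 x_2$ and $n_0+m_2 x_1+n_2 x_2$ (the identification $n_1=m_2$ being the Schwarz identity for $C^2$ maps), this condition is equivalent to the three vector equalities
\[
n_0=Tm_0,\qquad m_2=Tm_1,\qquad n_2=Tm_2.
\]
Thus the density question reduces to showing that the set of coefficient vectors $(m_0,m_1,m_2,n_0,n_2,c)\in\R^{12}$ admitting such a $T$ is dense in the $12$--dimensional coefficient space.

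First I would work on the open stratum on which $m_0$ and $m_1$ are linearly independent; this is a Zariski-open dense subset of the space of quadratic maps. On this stratum the first two equalities uniquely determine $T=[n_0\mid m_2]\,[m_0\mid m_1]^{-1}$, and the remaining perplex condition collapses to the single vector equation $n_2=Tm_2$. Given an arbitrary quadratic $f$, I would perturb it slightly into this generic stratum and then adjust the coefficients of $x_2^2$ in $u$ and $v$ — which together parametrize $n_2$ — so as to enforce $n_2=Tm_2$, producing a quadratic map that meets the existence condition of Proposition~\ref{prop:quad_char}.

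I would then apply the converse construction from the proof of that proposition: given $T$, choose $B\in\mathrm{GL}_2(\R)$, set $A:=BT$, and adjust the remaining free parameters of $B$ to satisfy the two defining equalities of $\mathcal{P}$, in an argument parallel to Theorem~\ref{thm:linear_perplex}(ii). This yields a pair $(a_n,b_n)\in\mathcal{P}$ with respect to which the perturbed map is $\mathbb{P}_{a_n,b_n}$--differentiable, giving an approximating sequence $f_n\to f$ of quadratic perplex functions. Since the Whitney topology on a finite-dimensional space of polynomials is just the coefficient topology, convergence in either sense is equivalent.

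The hardest step, and the one I expect to be the main obstacle, is the quantitative Whitney estimate: the adjustment that forces $n_2=Tm_2$ can a priori require a sizeable change in the $x_2^2$--coefficients, and there is no automatic reason for this residual to be small. Controlling it will require either a transversality argument showing that the perplex locus meets every Whitney neighborhood of $f$, or an iterative construction of the approximating sequence so that the discrepancy $Tm_2-n_2$ can be driven to zero simultaneously with $f_n\to f$; one must also verify that the corresponding matrices $T_n$ remain in the admissible image of the map $(a,b)\mapsto B^{-1}A$ on $\mathcal{P}$.
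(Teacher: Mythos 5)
Your proposal reproduces the paper's own argument: reduce to Proposition~\ref{prop:quad_char} via the Schwarz identity $n_1=m_2$, restrict to the stratum where $m_0,m_1$ are linearly independent so that $T=[\,n_0\mid m_2\,][\,m_0\mid m_1\,]^{-1}$ is forced, and then enforce $n_2=Tm_2$ by a final adjustment of the $x_2^2$-coefficients. The concern you flag at the end is not a technical loose end but a genuine gap, and it is exactly the gap hidden in the paper's one-line justification (``adjusting coefficients slightly allows $n_2=Tm_2$ to hold as well'').

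The obstruction is a dimension count, not a quantitative estimate, so neither a transversality argument nor an iterative refinement can close it. After accounting for the Schwarz identity, a quadratic map is determined by $(c,m_0,m_1,m_2,n_0,n_2)\in\R^{12}$, and the perplex condition is six scalar equations $Tm_0=n_0$, $Tm_1=m_2$, $Tm_2=n_2$ in the four entries of $T$. On the generic stratum $T$ is pinned down by the first four equations, leaving the residual constraint $n_2=Tm_2$ of codimension two. A codimension-two subset of $\R^{12}$ cannot be dense: take any quadratic with $Tm_2-n_2\neq 0$ (for instance $u=x_1+x_2+x_1^2$, $v=x_1+x_1x_2$ gives $Tm_2-n_2=(1,-\tfrac12)$); every sufficiently small perturbation keeps $Tm_2-n_2$ bounded away from zero, so no nearby quadratic is perplex. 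The paper's supporting example $g=(x_1^2,x_2^2)$ works only because $g$ is homogeneous: there $m_0=n_0=0$, so $Tm_0=n_0$ is vacuous and the remaining system $Tm_1=m_2$, $Tm_2=n_2$ determines $T$ whenever $m_1,m_2$ are independent, with no residual constraint. So the corollary as stated appears to be false; it is correct for homogeneous quadratics (where the codimension drops to zero), and for general affine-quadratic maps the honest statement is that the perplex quadratics form a proper codimension-two subvariety, not a dense set.
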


\begin{proof}
For generic $f$, $m_0$ and $m_1$ are linearly independent, so $T$ is uniquely determined by $n_0,n_1$.  
Adjusting coefficients slightly allows $n_2=T m_2$ to hold as well.  
Thus any quadratic can be approximated by a perplex quadratic.
\end{proof}

\begin{example}
Approximating $g(x_1,x_2)=(x_1^2,x_2^2)$: set
\[
f_\varepsilon(x_1,x_2)=(x_1^2+\varepsilon x_1x_2,\;\;\varepsilon x_1^2+x_2^2),\qquad \varepsilon\neq 0.
\]
Then $f_\varepsilon\to g$ smoothly as $\varepsilon\to 0$, and each $f_\varepsilon$ is a quadratic perplex function.
\end{example}

\section{Functions on several perplex variables}\label{sec:several}

We now extend the theory from one variable to several perplex variables.  
The natural ambient space is the free $\PP$--module $\PP^n$, which provides the right framework to define partial derivatives, directional derivatives, and tangent spaces.  
Within this setting we establish the analogue of the Łojasiewicz inequality for analytic perplex functions.

\subsection{Derivatives in several variables}

Fix a perplex algebra $\PP$. For $n\ge1$, write
\[
p=(p_1,\dots,p_n)\in \PP^n,\qquad p_i\in\PP.
\]
The canonical basis is $\{E_1,\dots,E_n\}$, where $E_i$ has entry $\mathds{1}$ in the $i$th coordinate and zero elsewhere.  
Scalar multiplication is defined componentwise: $x*p:=(x*p_1,\dots,x*p_n)$.

\begin{defi}\label{defi_multidiff}
Let $f:\PP^n\to\PP$ and $p\in\PP^n$.  
We say that $f$ admits the \emph{partial derivative with respect to $p_i$} at $p$ if there exists $L_i\in\PP$ such that, for every positively separated sequence $(h_n)\subset\PP^\times$ with $h_n\to0$,
\[
\lim_{n\to\infty} \big(f(p+h_n*E_i)-f(p)\big)*h_n^{-1}=L_i,
\]
and $L_i$ depends continuously on $p$. We then set $\tfrac{\partial f}{\partial p_i}(p):=L_i$.  
If this holds for all $i$, we say that $f$ is \emph{$\PP$--differentiable at $p$}.
\end{defi}

Writing $f=(u,v):\R^{2n}\to\R^2$, Theorem~\ref{theo_GCR} yields the multivariable generalized Cauchy--Riemann (GCR) equations:  

\begin{prop}\label{prop_multiGCR}
The map $f$ is $\PP$--differentiable at $p$ if and only if, for each $i=1,\dots,n$,
\begin{equation}\label{eq:multiGCR}
\begin{pmatrix} a_2 & a_3 \\ b_2 & b_3 \end{pmatrix}
\binom{u_{x_{i1}}}{v_{x_{i1}}}
=
\begin{pmatrix} a_1 & a_2 \\ b_1 & b_2 \end{pmatrix}
\binom{u_{x_{i2}}}{v_{x_{i2}}}.
\end{equation}
\end{prop}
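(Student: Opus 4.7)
The plan is to reduce the multivariable statement to the one-variable Theorem~\ref{theo_GCR} by freezing all but one perplex coordinate. For each index $i\in\{1,\dots,n\}$ and base point $p=(p_1,\dots,p_n)\in\PP^n$, I would introduce the slice map $g_i:\PP\to\PP$ defined by $g_i(z):=f(p_1,\dots,p_{i-1},z,p_{i+1},\dots,p_n)$. Comparing Definitions~\ref{defi_diff} and~\ref{defi_multidiff}, the existence of $\partial f/\partial p_i(p)$ in the sense of~\ref{defi_multidiff} is equivalent to the existence of $g_i'(p_i)$ in the sense of~\ref{defi_diff}, and the two values coincide. This is essentially formal: the increment $h_n*E_i$ used in the multivariable definition moves only the $i$th block of coordinates by $h_n$, so positively separated sequences in $\PP^n$ along the direction $E_i$ correspond bijectively to positively separated sequences in $\PP$.

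Next, assuming $f$ is $C^1$ as a map $\R^{2n}\to\R^2$, each slice $g_i$ is $C^1$ on $\R^2$ with real partial derivatives $(u_{x_{i1}},v_{x_{i1}})$ and $(u_{x_{i2}},v_{x_{i2}})$ evaluated along the slice. I would then apply Theorem~\ref{theo_GCR} to $g_i$: the slice is $\PP$-differentiable at $p_i$ if and only if it satisfies the one-variable generalized Cauchy--Riemann equation from Corollary~\ref{cor_CR}. Rewriting that identity in matrix form using the defining bilinear product $*$ from Section~\ref{sec:perplex} produces exactly the equality~\eqref{eq:multiGCR} for that index $i$. Running this equivalence over all $i$ simultaneously gives both directions of the claim.

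The only real obstacle is bookkeeping: one must verify that the continuity requirement on the partial derivatives $p\mapsto\partial f/\partial p_i(p)$ imposed by~\ref{defi_multidiff} (continuity in the full variable $p\in\PP^n$) is compatible with the one-variable continuity used in Theorem~\ref{theo_GCR} (continuity of $g_i'$ along the slice). This is immediate from the $C^1$ hypothesis, which ensures that both sides of~\eqref{eq:multiGCR} depend continuously on $p\in\PP^n$; Proposition~\ref{prop_equations} applied slice-wise then produces a joint-continuous formula for $\partial f/\partial p_i(p)$ in terms of the first-order real partial derivatives of $u$ and $v$. No new analytic ingredient beyond Theorem~\ref{theo_GCR} and this slice reduction is needed.
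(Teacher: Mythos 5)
Your slice-by-slice reduction to Theorem~\ref{theo_GCR} is correct and is exactly the argument the paper leaves implicit when it writes that Theorem~\ref{theo_GCR} ``yields'' the multivariable GCR equations: freeze all coordinates but the $i$th, note that the increment $h_n*E_i$ in Definition~\ref{defi_multidiff} turns the limit into the one-variable limit for the slice $g_i$, apply Theorem~\ref{theo_GCR}, and use the $C^1$ hypothesis (via Proposition~\ref{prop_equations}) to upgrade slice-wise continuity of $g_i'$ to joint continuity of $\partial f/\partial p_i$ in $p$. One small wording correction: the sequences $(h_n)$ in Definition~\ref{defi_multidiff} already live in $\PP$, not $\PP^n$, so there is nothing to match up between ``positively separated sequences in $\PP^n$'' and those in $\PP$ --- they are literally the same sequence.
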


Given $w=\sum_i w_i*E_i\in\PP^n$, the directional derivative is
\[
df_p\cdot w=\sum_{i=1}^n w_i * \tfrac{\partial f}{\partial p_i}(p).
\]
The \emph{perplex gradient} is defined by
\[
\nabla f(p)=\Big(\tfrac{\partial f}{\partial p_1}(p),\dots,\tfrac{\partial f}{\partial p_n}(p)\Big).
\]

\begin{lemma}\label{lemma_Dfp}
Let $Df_p:\R^{2n}\to\R^2$ be the real Jacobian of $f$ at $p$.  
If $w_i=\binom{w_{i1}}{w_{i2}}\in\R^2$, then
\[
df_p\cdot (w_1,\dots,w_n)=Df_p\cdot (w_{11},w_{12},\dots,w_{n1},w_{n2}).
\]
\end{lemma}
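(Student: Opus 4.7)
The plan is to reduce everything to the one–variable theorem. Since $\PP^n\cong\R^{2n}$ as real vector spaces with $w=(w_1,\dots,w_n)\leftrightarrow(w_{11},w_{12},\dots,w_{n1},w_{n2})$, the real Jacobian decomposes as $Df_p=\sum_{i=1}^n D_i f_p$, where $D_i f_p:\R^2\to\R^2$ is the partial differential in the $i$-th coordinate block, applied to $w_i\in\R^2$. So the entire claim reduces to showing, for each $i$, that $D_i f_p\cdot w_i = w_i*\tfrac{\partial f}{\partial p_i}(p)$, and then summing.

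To establish this block identity, I would freeze all variables except the $i$-th and form the slice map $g_i:\PP\to\PP$ defined by
\[
g_i(x)\;:=\;f(p_1,\dots,p_{i-1},x,p_{i+1},\dots,p_n).
\]
By Definition~\ref{defi_multidiff}, $g_i$ is $\PP$-differentiable at $p_i$ in the single-variable sense of Definition~\ref{defi_diff}, with $\PP$-derivative $g_i'(p_i)=\tfrac{\partial f}{\partial p_i}(p)$; equivalently, by Proposition~\ref{prop_multiGCR}, the components of $g_i$ satisfy the one–variable generalized Cauchy–Riemann equation. Applying Theorem~\ref{theo_GCR} to $g_i$ (specifically, the identification $Dg_i = L_{g_i'}$ established in Step~1 of its proof), the real differential of $g_i$ at $p_i$ is left multiplication by the $\PP$-derivative:
\[
Dg_i(p_i)\,w_i \;=\; L_{\partial f/\partial p_i(p)}(w_i)\;=\;\tfrac{\partial f}{\partial p_i}(p) * w_i.
\]

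Since $D_i f_p = Dg_i(p_i)$ as real linear maps, combining the block decomposition of $Df_p$ with commutativity of $*$ yields
\[
Df_p(w_{11},w_{12},\dots,w_{n1},w_{n2})\;=\;\sum_{i=1}^n w_i*\tfrac{\partial f}{\partial p_i}(p)\;=\;df_p\cdot(w_1,\dots,w_n),
\]
which is exactly the claimed identity. There is no real obstacle here; the only point requiring care is the passage between the perplex partial derivative (defined via positively separated sequences) and the classical partial derivative of the slice $g_i$, but this is immediate from Definitions~\ref{defi_diff} and~\ref{defi_multidiff} together with Theorem~\ref{theo_GCR}. The commutativity of $*$ is what lets us freely write $L_w(v)=w*v=v*w$, so that left multiplication by $\tfrac{\partial f}{\partial p_i}(p)$ agrees with the convention used in defining $df_p\cdot w$.
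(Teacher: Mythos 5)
Your proof is correct and follows essentially the same route as the paper's: both identify the $i$-th $2\times 2$ block of the real Jacobian with left multiplication by $\tfrac{\partial f}{\partial p_i}(p)$, i.e.\ $\binom{u_{x_{ik}}}{v_{x_{ik}}} = e_k * \tfrac{\partial f}{\partial p_i}(p)$, and then sum over $i$. The only cosmetic difference is that you derive this block identity by freezing the other variables and invoking Step~1 of the proof of Theorem~\ref{theo_GCR} for the slice map $g_i$, whereas the paper asserts it directly from the multivariable GCR equations~\eqref{eq:multiGCR}; your slice-map reduction is if anything a slightly more explicit way of sourcing the same key fact.
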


\begin{proof}
By \eqref{eq:multiGCR}, $\binom{u_{x_{i1}}}{v_{x_{i1}}}=e_1*\tfrac{\partial f}{\partial p_i}(p)$ and $\binom{u_{x_{i2}}}{v_{x_{i2}}}=e_2*\tfrac{\partial f}{\partial p_i}(p)$.  
Hence
\[
Df_p(w_{11},\dots,w_{n2})=\sum_{i=1}^n(w_{i1}e_1+w_{i2}e_2)*\tfrac{\partial f}{\partial p_i}(p)=df_p\cdot(w_1,\dots,w_n).
\]
\end{proof}

Set
\[
J_p := \big\langle \tfrac{\partial f}{\partial p_1}(p),\dots,\tfrac{\partial f}{\partial p_n}(p)\big\rangle
\subset \PP,
\]
that is, the ideal of $\PP$ generated by the partial derivatives of $f$ at $p$.

\begin{lemma}
Let $f:\PP^n\to\PP$ be $\PP$-differentiable and $p\in\PP^n$. Then the following are equivalent:
\begin{enumerate}
\item $p$ is a critical point ($\operatorname{rank}Df_p<2$);
\item $J_p$ is a proper ideal of $\PP$;
\item $df_p(\PP^n)\subset \PP\setminus\PP^\times$.
\end{enumerate}
\end{lemma}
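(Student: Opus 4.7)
The plan is to reduce all three conditions to the single statement that $J_p$ is a proper subset of $\PP$. The key observation is that, because $*$ is commutative, the ideal $J_p$ generated by the partial derivatives coincides with the set of all $\PP$-linear combinations $\sum_{i=1}^n w_i * \tfrac{\partial f}{\partial p_i}(p)$ with $w_i \in \PP$; by the definition of the directional derivative, this set is exactly $df_p(\PP^n)$. Combining this identification with Lemma~\ref{lemma_Dfp}, which realizes $df_p$ as the real Jacobian $Df_p$ under the identifications $\PP \cong \R^2$ and $\PP^n \cong \R^{2n}$, I would record the chain
\[
\operatorname{Im}(Df_p) \;=\; df_p(\PP^n) \;=\; J_p \;\subset\; \PP.
\]

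With this in hand, the two equivalences are immediate. For $(1) \Leftrightarrow (2)$, since $Df_p$ maps into the two-dimensional space $\R^2$, the rank condition $\operatorname{rank} Df_p < 2$ is equivalent to $\operatorname{Im}(Df_p)$ being a proper subspace of $\R^2$, which under the above identification is exactly $J_p \subsetneq \PP$, i.e.\ $J_p$ is a proper ideal. For $(2) \Leftrightarrow (3)$, I would invoke the standard fact that in a commutative unital ring an ideal is proper iff it contains no units: if $u \in J_p \cap \PP^\times$ then $\mathds{1} = u^{-1} * u \in J_p$, forcing $J_p = \PP$. Hence $J_p$ is proper iff $J_p \cap \PP^\times = \emptyset$, which is precisely the statement $df_p(\PP^n) \subset \PP \setminus \PP^\times$.

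There is no real obstacle in this argument; the substantive work has already been done in Lemma~\ref{lemma_Dfp} and in the characterization of units via the perplex norm from Section~\ref{sec:perplex}. The only point requiring a moment's care is the use of commutativity of $*$ to ensure that the set of $\PP$-linear combinations of the $\tfrac{\partial f}{\partial p_i}(p)$ is itself closed under multiplication by arbitrary elements of $\PP$, so that $df_p(\PP^n)$ literally \emph{is} the ideal $J_p$ rather than merely a generating set for it.
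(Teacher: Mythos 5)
Your proof is correct, and since the paper states this lemma without proof, there is no proof in the source to compare against; your argument is the natural one and matches the reasoning implicitly used in the paper's proof of Corollary~\ref{cor_ts}, where the implication ``$\operatorname{rank}Df_p = 2$ implies $J_p$ contains a unit'' is invoked. Your observation that $\operatorname{Im}(Df_p) = df_p(\PP^n) = J_p$ via Lemma~\ref{lemma_Dfp} and commutativity of $*$, followed by the dimension count for $(1) \Leftrightarrow (2)$ and the standard ``proper ideal contains no units'' argument for $(2) \Leftrightarrow (3)$, is exactly the intended chain of equivalences.
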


\begin{defi}
A \emph{perplex hyperplane} in $\PP^n$ is a free $\PP$--submodule of rank $n-1$.
Equivalently, it is the kernel of a nonzero $\PP$--linear functional 
$\ell:\PP^n\to\PP$.  
Under the identification $\PP^n\cong\R^{2n}$, a perplex hyperplane corresponds
to a real linear subspace of codimension $2$.
\end{defi}

\begin{cor}\label{cor_ts}
If $p$ is regular, then $\ker(df_p)$ is a perplex hyperplane in $\PP^n$ and
\[
\ker(df_p)=\ker(Df_p)=T_p\big(f^{-1}(f(p))\big)\subset\R^{2n}.
\]
\end{cor}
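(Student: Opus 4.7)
The plan is to combine the $\PP$--linearity of $df_p$ with the regularity hypothesis to identify $\ker(df_p)$ as a rank--$(n-1)$ free $\PP$--submodule, and then invoke Lemma~\ref{lemma_Dfp} together with the implicit function theorem to obtain the two stated equalities.

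First I would verify that $df_p$ is $\PP$--linear, not merely $\R$--linear: if $x\in\PP$ and $w=(w_1,\dots,w_n)\in\PP^n$, then
\[
df_p\cdot(x*w)=\sum_i (x*w_i)*\tfrac{\partial f}{\partial p_i}(p)=x*\sum_i w_i*\tfrac{\partial f}{\partial p_i}(p)=x*(df_p\cdot w),
\]
using associativity of $*$. The image of $df_p$ is therefore the $\PP$--submodule of $\PP$ generated by the partial derivatives, which is precisely $J_p$. The preceding lemma says that a regular $p$ forces $J_p=\PP$, so $df_p$ is surjective. Picking any $v\in\PP^n$ with $df_p(v)=\mathds{1}$, the decomposition $w=df_p(w)*v+(w-df_p(w)*v)$ splits the short exact sequence and yields $\PP^n=\PP\cdot v\oplus \ker(df_p)$.

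Next I would upgrade this projective complement to a free module of rank $n-1$, using the classification of Proposition~\ref{prop:classification}. If $\PP\cong\C$ then $\ker(df_p)$ is a $\PP$--vector subspace whose $\R$--dimension $2(n-1)$ forces its $\PP$--dimension to be $n-1$. If $\PP\cong\R[\varepsilon]/(\varepsilon^2)$ then $\PP$ is local, so every finitely generated projective module is free, and the dimension count again pins down the rank. If $\PP\cong\R\oplus\R$, splitting through the two idempotents converts $df_p$ into a pair of $\R$--linear surjections $\R^n\to\R$, whose kernels are each copies of $\R^{n-1}$, and these reassemble into $\PP^{n-1}$. In all three cases $\ker(df_p)$ is a perplex hyperplane.

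Finally, Lemma~\ref{lemma_Dfp} identifies $df_p$ with the real Jacobian $Df_p$ under the isomorphism $\PP^n\cong\R^{2n}$, so $\ker(df_p)=\ker(Df_p)$ as real subspaces; and since $\operatorname{rank}Df_p=2$, the implicit function theorem presents $f^{-1}(f(p))$ locally near $p$ as a smooth codimension--$2$ submanifold with tangent space $\ker(Df_p)$. The most delicate step is freeness in the hyperbolic regime, where $\PP$ is not local: the balanced rank across the two idempotent components is not automatic for an arbitrary projective complement, but is secured here by the surjectivity of $df_p$ onto each factor of $\PP\cong\R\oplus\R$.
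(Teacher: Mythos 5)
Your proof follows essentially the same route as the paper's: identify $\mathrm{im}(df_p)$ with the ideal $J_p$, use regularity to produce a $\PP$--linear section that splits the sequence $\PP^n\to\PP$, conclude that $\ker(df_p)$ is a rank--$(n-1)$ complement, and then obtain the two equalities from Lemma~\ref{lemma_Dfp} and the implicit function theorem. The one place you go beyond the paper is worth noting: the paper passes directly from the splitting $\PP^n\cong\ker(df_p)\oplus\PP$ to the assertion that $\ker(df_p)$ is free of rank $n-1$, tacitly using that stably free implies free over $\PP$; your case-by-case check (field, local ring, and the hyperbolic product via the two idempotents) supplies precisely the argument the paper leaves implicit, and you correctly flag the hyperbolic case as the only one where this needs a real argument.
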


\begin{proof}
Since $p$ is regular, $Df_p:\R^{2n}\to\R^2$ has real rank $2$. By Lemma~\ref{lemma_Dfp},
\[
df_p(\PP^n)\;=\;\Big\{\sum_{i=1}^n w_i*\tfrac{\partial f}{\partial p_i}(p):w_i\in\PP\Big\}
=:J_p\subset\PP .
\]
Because $\operatorname{rank} Df_p=2$, the ideal $J_p$ contains a unit $u\in\PP^\times$, hence $df_p$ is surjective as a $\PP$–linear map. Choose $c=(c_1,\dots,c_n)\in\PP^n$ with
$\sum_i c_i*\tfrac{\partial f}{\partial p_i}(p)=u$, and define a $\PP$–linear section
$s:\PP\to\PP^n$, $s(y):=(y*u^{-1})*c$. Then $df_p\circ s=\mathrm{id}_{\PP}$, so
\[
\PP^n \;=\; \ker(df_p)\oplus \mathrm{im}(s)\ \cong\ \ker(df_p)\oplus\PP,
\]
which shows that $\ker(df_p)$ is a free $\PP$–module of rank $n-1$.

The identification $\ker(df_p)=\ker(Df_p)=T_p(f^{-1}(f(p)))$ follows directly from 
Lemma~\ref{lemma_Dfp} and the implicit function theorem.
\end{proof}

Finally, the following technical lemma will be needed later.

\begin{lemma}\label{lemma_limites}
Let $f:\PP^{n+1}\to\PP$ be $\PP$--differentiable and let $(p_k)\to p$ be a sequence of regular points.  
Write the gradient at $p_k$ as
\[
\nabla f(p_k)=\Big(\tfrac{\partial f}{\partial p_1}(p_k),\dots,\tfrac{\partial f}{\partial p_{n+1}}(p_k)\Big)\in\PP^{\,n+1}.
\]
Decompose $\nabla f(p_k)=(\alpha_k,\beta_k)$ with $\alpha_k\in\PP^n$ (the first $n$ components) 
and $\beta_k\in\PP$ (the last component).  
If
\[
\frac{\|\alpha_k\|_m}{\|\beta_k\|_m}\;\longrightarrow\;\infty,
\]
then the hyperplanes $\ker Df_{p_k}$ do not converge to $\PP^n\times\{0\}$.
\end{lemma}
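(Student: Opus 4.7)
The plan is to construct, in each kernel $\ker Df_{p_k}$, a sequence of Euclidean unit vectors whose accumulation points lie outside $\PP^n\times\{0\}$. This directly precludes Grassmannian convergence of the hyperplanes to that target.

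For the construction, I would first pick an index $i_k\in\{1,\ldots,n\}$ realizing $\|\alpha_{k,i_k}\|_m=\|\alpha_k\|_m$, and pass to a subsequence making $i_k\equiv i$ constant; the hypothesis then gives $\|\alpha_{k,i}\|_m/\|\beta_k\|_m\to\infty$. Exploiting commutativity of $*$, I would then set
\[
w_k := \beta_k * E_i \;-\; \alpha_{k,i} * E_{n+1}\in\PP^{n+1}.
\]
A direct calculation using $\beta_k*\alpha_{k,i}=\alpha_{k,i}*\beta_k$ shows $df_{p_k}(w_k)=0$, so $w_k\in\ker df_{p_k}$. By Lemma~\ref{lemma_Dfp} this equals $\ker Df_{p_k}$ under the identification $\PP^{n+1}\cong\R^{2(n+1)}$.

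Next, I would normalize $v_k := w_k/\|w_k\|$ in the Euclidean norm and inspect its last $\PP$-block $-\alpha_{k,i}/\|w_k\|$. Using the equivalence of $\|\cdot\|$ and $\|\cdot\|_m$ on $\PP$ together with $\|\alpha_{k,i}\|_m/\|\beta_k\|_m\to\infty$, this block has Euclidean norm tending to $1$, while all earlier blocks tend to $0$. By compactness, some subsequence of $(v_k)$ converges to a unit vector of the form $(0,\ldots,0,\zeta)$ with $\|\zeta\|=1$, which does not lie in $\PP^n\times\{0\}$. If $\ker Df_{p_k}$ converged to $\PP^n\times\{0\}$ in the Grassmannian, every such limit would have to lie in $\PP^n\times\{0\}$, and this contradiction proves the statement.

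The only subtlety I foresee is the purely topological one of confirming the unit-vector criterion for Grassmannian convergence of the codimension-$2$ subspaces $\ker Df_{p_k}\subset\R^{2(n+1)}$; this is standard. The conceptual crux is the commutativity trick $\beta_k*E_i-\alpha_{k,i}*E_{n+1}\in\ker df_{p_k}$, which is precisely where the perplex structure—specifically commutativity of the product—enters in an essential way.
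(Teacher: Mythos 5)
Your proof is correct and takes a genuinely different, essentially dual, route from the paper's. The paper argues on the \emph{covector} side: it normalizes the defining functionals $L_k(u)=\sum_{i=1}^n u_i*(\alpha_k)_i + u_{n+1}*\beta_k$ by dividing by $\|(\alpha_k,\beta_k)\|_m$, observes that the hypothesis forces the normalized coefficients to converge (subsequentially) to $(\alpha_\infty,0)$ with $\alpha_\infty\neq 0$, and concludes that the limiting functional fails to vanish on $\PP^n\times\{0\}$, so the kernels cannot converge to that subspace. You argue on the \emph{vector} side: you exhibit an explicit element $w_k=\beta_k*E_i-\alpha_{k,i}*E_{n+1}$ in $\ker df_{p_k}$ by exploiting commutativity of $*$, then show that its Euclidean normalization $v_k$ accumulates at $(0,\dots,0,\zeta)$ with $\zeta\neq 0$, contradicting the Grassmannian characterization of convergence. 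Both proofs use the same asymptotic input ($\|\alpha_k\|_m$ dominating $\|\beta_k\|_m$) and the same elementary fact about Grassmannian limits; your version is more concrete and makes visible exactly where commutativity of the perplex product is invoked, while the paper's is shorter and works directly with the annihilating functional. One small point: the paper glosses over the passage to a subsequence needed for $\tilde\alpha_k\to\alpha_\infty$, which your argument handles explicitly and correctly.
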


\begin{proof}
Each kernel $\ker Df_{p_k}$ is the hyperplane defined by the linear functional 
\[
L_k(u_1,\dots,u_{n+1})=\sum_{i=1}^{n} u_i * (\alpha_k)_i \;+\; u_{n+1} * \beta_k.
\]
Normalizing, set $\tilde c_k=(\tilde\alpha_k,\tilde\beta_k):=(\alpha_k,\beta_k)/\|(\alpha_k,\beta_k)\|_m$.  
The hypothesis $\|\alpha_k\|_m/\|\beta_k\|_m\to\infty$ implies 
$\tilde\alpha_k\to\alpha_\infty\neq 0$ and $\tilde\beta_k\to 0$.  
Thus the defining functionals $\tilde L_k$ converge to 
\[
L_\infty(u_1,\dots,u_{n+1})=\sum_{i=1}^n u_i * (\alpha_\infty)_i,
\]
whose kernel is not $\PP^n\times\{0\}$.  
Hence $\ker Df_{p_k}$ cannot converge to $\PP^n\times\{0\}$.
\end{proof}

\subsection{The Łojasiewicz inequality}

For a real analytic $f:\R^n\to\R$ with $f(0)=0$, the classical Łojasiewicz inequality states that there exist $C>0$ and $0<\theta<1$ such that $\|\nabla f(x)\|\ge C|f(x)|^\theta$ near $0$ \cite{Lo}.  
Analogues hold in the complex case. We now extend this to analytic perplex functions.

\begin{defi}
A map $f:\PP^n\to\PP$ is \emph{$\PP$--analytic} if it is $\PP$--differentiable and its real coordinate functions $u,v:\R^{2n}\to\R$ are real analytic.  
A \emph{perplex analytic function} is a real analytic map $f:\R^{2n}\to\R^2$ that is $\PP$--analytic for some $(a,b)\in\mathcal P$.
\end{defi}

\begin{theo}\label{theo_Loja}
Let $f:\PP^n\to\PP$ be $\PP$--analytic with $f(0)=0$.  
Then there exist $U\ni0$, $C>0$, and $0<\theta<1$ such that
\[
\|\nabla f(p)\|_m\;\ge\; C\,\|f(p)\|_m^\theta,\qquad p\in U.
\]
\end{theo}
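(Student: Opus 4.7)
The plan is to reduce the perplex Łojasiewicz inequality to the classical real--analytic one, applied to the scalar proxy $g(p):=u(p)^2+v(p)^2$, where $f=(u,v):\R^{2n}\to\R^2$. Since $f(0)=0$, we have $g(0)=0$ and $g$ is real analytic on $\R^{2n}$, so the standard theory applies directly to $g$.

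First I would compare the perplex--gradient norm $\|\nabla f(p)\|_m$ with a matrix norm of the real Jacobian $Df_p$. Lemma~\ref{lemma_Dfp} identifies each column of $Df_p$ with $e_j*\tfrac{\partial f}{\partial p_i}(p)$ for $j\in\{1,2\}$; since both $e_1$ and $e_2$ are units in $\PP$ (Proposition~\ref{prop_perplex}), applying Proposition~\ref{prop_norm} once with multiplication by $e_j$ and once with multiplication by $e_j^{-1}$ yields positive constants $c_1,c_2$ depending only on $(a,b)$ with $c_1\|\nabla f(p)\|_m \le \|Df_p\|_F \le c_2\|\nabla f(p)\|_m$, where $\|\cdot\|_F$ is the Frobenius norm.

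Next, I would invoke the classical real Łojasiewicz gradient inequality for $g$ at the origin, producing $C_0>0$, an exponent $\theta_0\in(0,1)$, and a neighborhood $U$ of $0$ on which $\|\nabla g\|_2\ge C_0\,g^{\theta_0}$. The chain rule gives $\partial g/\partial x_j=2u\,u_{x_j}+2v\,v_{x_j}$, so a componentwise Cauchy--Schwarz bound yields $\|\nabla g\|_2\le 2\,\|f\|_2\,\|Df\|_F$. Dividing on the locus $\{f\neq 0\}$ and translating via the norm comparison of step one, together with the equivalence of $\ell^2$ and $\ell^\infty$ norms on $\R^2$, produces $\|\nabla f(p)\|_m\ge C\,\|f(p)\|_m^{\,2\theta_0-1}$, which already has the required shape.

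The delicate step I expect to be the main obstacle is ensuring that the resulting exponent $2\theta_0-1$ lies in $(0,1)$ rather than being negative or zero. For $g=u^2+v^2$ with $u(0)=v(0)=0$, the best Łojasiewicz exponent necessarily satisfies $\theta_0\ge 1/2$: by the curve selection lemma, any analytic arc $\gamma$ through $0$ of order $q\ge 1$ satisfies $\mathrm{ord}_0(u\circ\gamma),\mathrm{ord}_0(v\circ\gamma)\ge q$, whence $\mathrm{ord}_0(g\circ\gamma)\ge 2q$; bounding $\|\nabla g(\gamma(t))\|$ from below by $|(g\circ\gamma)'(t)|/\|\gamma'(t)\|$ then forces $\theta_0\ge 1-q/N\ge 1/2$, where $N=\mathrm{ord}_0(g\circ\gamma)\ge 2q$. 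Since enlarging the exponent only weakens a Łojasiewicz inequality (as $\|f\|_m$ is bounded on $U$), one may assume $\theta_0\in[1/2,1)$, so $2\theta_0-1\in[0,1)$; replacing the final exponent by $\max(2\theta_0-1,\tfrac12)$ if necessary places $\theta\in(0,1)$, shrinking $U$ and adjusting $C$ accordingly. Once this exponent-management step is carried out, the core analytic reduction to the classical inequality is essentially routine given the norm comparison of step one.
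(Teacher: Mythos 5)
Your argument is correct but takes a genuinely different route from the paper's. The paper applies the classical \L{}ojasiewicz gradient inequality separately to the two real components $u$ and $v$: from the generalized Cauchy--Riemann relation and Proposition~\ref{prop_norm} one obtains $\|\nabla f(p)\|_m \geq \tfrac{1}{\sqrt{2}K}\max\{\|\nabla u(p)\|,\|\nabla v(p)\|\}$, and the scalar \L{}ojasiewicz inequalities for $u$ and $v$ then combine to give the estimate with $\theta=\max\{\theta_u,\theta_v\}$. You instead pass to the single scalar proxy $g=u^2+v^2$, apply \L{}ojasiewicz once, and transfer the bound back to $\|\nabla f(p)\|_m$ via the chain rule, Cauchy--Schwarz, and your two-sided comparison between $\|Df_p\|_F$ and $\|\nabla f(p)\|_m$ (which indeed follows from Lemma~\ref{lemma_Dfp} and Proposition~\ref{prop_norm} as you indicate). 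Both routes are of comparable strength and effort; the paper's is slightly more direct, while yours uses the standard real-analytic trick of reducing a vector-valued \L{}ojasiewicz inequality to a scalar one, at the cost of some exponent bookkeeping. One small point: the curve-selection argument you sketch for $\theta_0\ge 1/2$ does not quite close as stated, since along an arc $\gamma$ the order of vanishing of $\|\nabla g\circ\gamma\|$ is only bounded \emph{above} by $N-q$, not pinned down, so the inference $\theta_0\ge 1-q/N$ does not follow from that comparison alone; but as you correctly observe, this detour is unnecessary, because $\|f\|_m$ is bounded near the origin and one may simply enlarge the final exponent to land in $(0,1)$.
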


\begin{proof}
For a $\PP$--differentiable function $g:\PP\to\PP$, the GCR equation and Proposition~\ref{prop_norm} imply
\[
\|g'(x)\|_m\;\ge\; \tfrac1K \max\{|u_{x_1}(x)|,|u_{x_2}(x)|,|v_{x_1}(x)|,|v_{x_2}(x)|\},\quad K>0.
\]
Hence
\[
\|\nabla f(p)\|_m \;\ge\; \tfrac1{\sqrt{2}K}\max\{\|\nabla u(p)\|,\|\nabla v(p)\|\}.
\]
Applying the classical Łojasiewicz inequality to $u$ and $v$ gives
\[
\|\nabla f(p)\|_m \;\ge\; C\|f(p)\|_m^\theta
\]
near $0$.
\end{proof}

\begin{cor}
If $f:\R^{2n}\to\R^2$ is a perplex analytic function with $f(0)=0$, then in a neighborhood of $0$ every $p\notin f^{-1}(0)$ satisfies $\operatorname{rank}Df_p\ge1$ (that is, the real differential $Df_p$ is nonzero).
\end{cor}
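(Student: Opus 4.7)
The plan is to read the corollary as a direct consequence of Theorem~\ref{theo_Loja}, with the only nontrivial bookkeeping being the translation between the perplex gradient $\nabla f(p)\in\PP^n$ and the real differential $Df_p:\R^{2n}\to\R^2$.

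First, I would apply Theorem~\ref{theo_Loja} directly to $f$, obtaining a neighborhood $U\ni 0$ together with constants $C>0$ and $0<\theta<1$ such that $\|\nabla f(p)\|_m\ge C\,\|f(p)\|_m^{\theta}$ for every $p\in U$. Then I would fix an arbitrary $p\in U\setminus f^{-1}(0)$. Since $f(p)\ne 0$ in $\PP$, the max-norm $\|f(p)\|_m$ is strictly positive, so the right-hand side of the inequality is strictly positive, forcing $\|\nabla f(p)\|_m>0$. In particular there exists an index $i\in\{1,\dots,n\}$ with $\partial f/\partial p_i(p)\neq 0$ as an element of $\PP$.

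Next I would translate this nonvanishing statement from the perplex to the real setting. By the multivariable GCR equations (Proposition~\ref{prop_multiGCR}) together with Proposition~\ref{prop_equations}, the perplex partial derivative is recovered from the real ones by
\[
\tfrac{\partial f}{\partial p_i}(p)=e_1^{-1}*\binom{u_{x_{i1}}(p)}{v_{x_{i1}}(p)}=e_2^{-1}*\binom{u_{x_{i2}}(p)}{v_{x_{i2}}(p)},
\]
so $\partial f/\partial p_i(p)\neq 0$ forces $(u_{x_{i1}}(p),v_{x_{i1}}(p))\neq 0$, because multiplication by the unit $e_1^{-1}$ is invertible. Equivalently, by Lemma~\ref{lemma_Dfp}, $df_p(E_i)=\partial f/\partial p_i(p)\neq 0$ means the real Jacobian $Df_p$ does not annihilate the real vector corresponding to $E_i$. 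Consequently $Df_p\ne 0$ as a linear map $\R^{2n}\to\R^2$, and therefore $\operatorname{rank}Df_p\ge 1$, establishing the claim.

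There is essentially no serious obstacle here: the analytic content lives entirely in Theorem~\ref{theo_Loja}, and the remaining work is a purely algebraic translation through the GCR equations. The only point one needs to be careful about is ensuring that a nonzero perplex partial derivative (a vector in $\PP=\R^2$ whose components may be small in delicate directions) really does produce a nonzero column of the real Jacobian; this is guaranteed by the fact that $e_1,e_2$ are units in $\PP$, so left multiplication by $e_i^{-1}$ is an $\R$-linear isomorphism of $\R^2$.
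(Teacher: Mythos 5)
Your proposal is correct and matches the intended reasoning: the paper presents this corollary without proof as an immediate consequence of Theorem~\ref{theo_Loja}, and you have filled in exactly the natural argument. The reduction of $\nabla f(p)\neq 0$ in $\PP^n$ to nonvanishing of a column of the real Jacobian via the GCR relations and the invertibility of $e_1$ is precisely the right bookkeeping, and you correctly flagged this as the only point requiring care.
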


\begin{remark}
Unlike the complex case, $\operatorname{rank}Df_p$ may equal $1$ rather than $2$, since nonzero partial derivatives need not be units in $\PP$. This phenomenon reflects the new algebraic structure of the perplex setting.
\end{remark}

\section{Singularities in the perplex setting}\label{sec:sing}

In the classical theory of complex analytic functions, two fundamental results describe the local topology of singularities: the existence of a good stratification and the Milnor--Lê fibration theorem.  
In this section we show that these extend naturally to the perplex setting.  
Using the Łojasiewicz inequality from Theorem~\ref{theo_Loja}, we adapt the arguments of Hamm and Lê to prove that every $\PP$--analytic function admits a good stratification, whenever $\PP$ is nondegenerate.  
As a consequence, we obtain a Milnor--Lê type fibration theorem for nondegenerate perplex analytic maps $f:\R^{2n}\to\R^2$, giving the basic local topological structure around their singularities.

\subsection{Good stratification}

Let $f:\PP^n\to\PP$ be $\PP$--analytic with $f(0)=0$, and set
$V:=\{f=0\}\subset\R^{2n}$.  
Let $\mathcal S=\{\mathcal S_\alpha\}_{\alpha\in\Lambda}$ be a Whitney stratification of $V$.

Following \cite{HL}, we say that $\mathcal S$ is a \emph{good stratification} of $V$ at $0$ if there exists a neighborhood $U\subset\R^{2n}$ of $0$ such that, for every sequence $q_i\in U\setminus V$ converging to $q\in V\cap U$, with tangent spaces 
$T_{q_i}(f^{-1}(f(q_i)))$ converging to a plane $T$, one has
\[
T \supset T_q\mathcal S_{\alpha(q)},
\]
where $\mathcal S_{\alpha(q)}$ is the stratum containing $q$.

Hamm and Lê proved in \cite{HL} that every $\C$--analytic function admits a good stratification.  
Since their proof relies only on the Łojasiewicz inequality, it extends verbatim to the perplex case via Theorem~\ref{theo_Loja}:

\begin{theo}\label{theo_stratification}
Let $\PP$ be a nondegenerate perplex algebra. 
Then every $\PP$–analytic function $f : \PP^n \to \PP$
admits a good stratification.
\end{theo}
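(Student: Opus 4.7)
The plan is to transport the argument of Hamm and L\^e~\cite{HL} to the perplex setting, using Theorem~\ref{theo_Loja} as the single analytic input and the structural results of Section~\ref{sec:several} to translate the underlying complex-geometric constructions. Since $f:\R^{2n}\to\R^2$ is real analytic, the zero set $V=f^{-1}(0)$ admits a Whitney stratification $\mathcal S=\{\mathcal S_\alpha\}$ by standard real analytic geometry; the task is therefore to show that, possibly after refining $\mathcal S$, the good stratification condition holds at every $q\in V$.

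I would argue by contradiction. Fix $q\in\mathcal S_\alpha$ and suppose there exists a sequence of regular points $q_i\in U\setminus V$ with $q_i\to q$ such that $T_i:=T_{q_i}\bigl(f^{-1}(f(q_i))\bigr)$ converges to a plane $T$ with $T\not\supset T_q\mathcal S_\alpha$. By Corollary~\ref{cor_ts}, each $T_i$ coincides with $\ker Df_{q_i}$, so its $2$-codimensional real orthogonal complement is spanned by the two rows of the real Jacobian, which in turn encode the components of the perplex gradient $\nabla f(q_i)\in\PP^n$. Lemma~\ref{lemma_limites} then provides the dictionary translating the assumed misalignment between $T$ and $T_q\mathcal S_\alpha$ into a quantitative imbalance between the components of $\nabla f(q_i)$ along $T_q\mathcal S_\alpha$ and along its normal direction.

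Next, I would invoke the curve selection lemma to replace $(q_i)$ by a real analytic arc $\gamma:[0,\varepsilon)\to U$ with $\gamma(0)=q$ along which the bad limit persists and all relevant data are one-parameter semialgebraic. Along $\gamma$, the identity $\tfrac{d}{dt}f(\gamma(t))=df_{\gamma(t)}\cdot\gamma'(t)$ is computed via Lemma~\ref{lemma_Dfp}, so that perplex multiplication explicitly couples $\gamma'(t)$ to the gradient components. Combining this with the perplex \L{}ojasiewicz inequality of Theorem~\ref{theo_Loja} yields a differential estimate of the form $\tfrac{d}{dt}\|f(\gamma(t))\|_m^{\,1-\theta}\ge c>0$ near $t=0$, forcing $\|f(\gamma(t))\|_m$ to decay at most polynomially. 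On the other hand, the assumed asymptotic tangency $T_i\to T$ forces the component of $\nabla f(\gamma(t))$ transverse to $T_q\mathcal S_\alpha$ to vanish strictly faster than $\|f(\gamma(t))\|_m^\theta$, contradicting Theorem~\ref{theo_Loja}.

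The main obstacle lies in the hyperbolic-perplex case $\PP\cong\R\oplus\R$, where zero divisors form an entire one-dimensional cone and perplex linear functionals degenerate along it: the identification of $\ker Df_{q_i}$ as a perplex hyperplane in Corollary~\ref{cor_ts} relies on the ideal $J_{q_i}$ containing a unit, which is guaranteed precisely by the nondegeneracy hypothesis (in the degenerate case, nilpotent directions would make $J_{q_i}$ a proper ideal even at points of real rank $2$). Once the positive separation of the increments used to test $\PP$-differentiability along $\gamma$ is verified via Proposition~\ref{prop_direction} and Corollary~\ref{cor_direction_coords}, the Hamm--L\^e machinery transfers without essential modification.
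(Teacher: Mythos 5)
There is a genuine gap: the central device of the Hamm--Lê argument, which the paper imports, is the auxiliary $\PP$-analytic map
\[
g_N:\PP^n\times\PP\to\PP,\qquad g_N(p,t)=f(p)-t^N\quad(N\ \text{even, large}),
\]
and you never introduce it. The paper does not take an arbitrary Whitney stratification of $V$ and ``refine'' it in some unspecified way; it takes a Whitney stratification $\mathcal G^N$ of $\{g_N=0\}$ adapted so that $V\times\{0\}$ is a union of strata, projects it to a stratification $\mathcal S^N$ of $V$, and shows that \emph{this} stratification is good for $N$ large. Without the extra $t$-variable, none of the key lemmas engage: Lemma~\ref{lemma_limites} is stated precisely for the domain splitting $\PP^n\times\PP$ (last component vs.\ the rest), and is applied to $\nabla g_N(x_i)=(\nabla f(q_i),\,-N t_i^{N-1})$; it is \emph{not} a decomposition of $\nabla f$ into parts tangent and transverse to a stratum $T_q\mathcal S_\alpha$, and the ``dictionary'' you describe does not exist in that lemma. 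Likewise Lemma~\ref{lemma_norm_limits} is what turns Theorem~\ref{theo_Loja} into the divergence $\|t_i^N\|_m^\theta/\|t_i^{N-1}\|_m\to\infty$, which is meaningless until the $t$-variable is in play, and it is also where nondegeneracy enters (in the degenerate algebra $\hat t^k$ can vanish on the unit sphere, killing the lemma's hypothesis).

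Your alternative route via curve selection and a differential estimate $\tfrac{d}{dt}\|f(\gamma(t))\|_m^{1-\theta}\ge c$ also does not close as stated. That estimate is the signature of a gradient-flow argument (length/retraction bounds), but here $\gamma$ is an arbitrary analytic arc produced by curve selection, not a gradient trajectory, so there is no reason the estimate holds. More importantly, the final step --- ``the assumed asymptotic tangency forces the transverse component of $\nabla f$ to vanish faster than $\|f\|_m^\theta$, contradicting Theorem~\ref{theo_Loja}'' --- is unsupported: Theorem~\ref{theo_Loja} bounds the \emph{full} gradient norm from below, not any particular component, so a small transverse component is not by itself a contradiction. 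The paper obtains a genuine contradiction only by confronting $\tau=\PP^n\times\{0\}$ (forced by Whitney (a) for $\mathcal G^N$, together with the fact that $\tau$ is a perplex hyperplane containing strictly more than a hyperplane of $\PP^n\times\{0\}$) with the conclusion of Lemma~\ref{lemma_limites}, which forbids $\ker D(g_N)_{x_i}$ from converging to $\PP^n\times\{0\}$ once $\|\nabla f(q_i)\|_m / \|N t_i^{N-1}\|_m\to\infty$. You would need to rebuild your argument around the $g_N$-construction for the logic to go through.
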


\begin{proof}
Let $f:\PP^n\to\PP$ be $\PP$--analytic with $f(0)=0$ and set
$V:=\{f=0\}\subset\R^{2n}$.
Fix $N\in\mathbb{N}$ even and consider the $\PP$--analytic map
\[
g_N:\PP^n\times\PP\longrightarrow\PP,\qquad g_N(p,t):=f(p)-t^N,
\]
and the real-analytic hypersurface
\[
G_N:=\{g_N=0\}\cap (U\times\PP),
\]
where $U\subset\PP^n$ is a sufficiently small neighborhood of $0$ (to be chosen later).
Since $G_N$ is a closed real-analytic subset of $U\times\PP\cong\R^{2(n+1)}$,
it admits a Whitney stratification $\mathcal G^N$ such that $V\times\{0\}$ is a union of strata.
Projecting to the first factor, the stratification $\mathcal G^N$
induces a Whitney stratification $\mathcal S^N$ of $V$. We claim that for $N$ large enough, $\mathcal S^N$ is a good stratification of $V$ at $0$.

Suppose not. Then there exist $\varepsilon>0$ and a sequence
$q_i\in \big(\B_\varepsilon\cap U\big)\setminus V$ converging to $q\in V\cap \B_\varepsilon$ such that:
\begin{itemize}
\item[(a)] each $q_i$ is a regular point of $f$ (i.e. $\operatorname{rank}Df_{q_i}=2$);
\item[(b)] the tangent spaces $T_{q_i}\big(f^{-1}(f(q_i))\big)$ are defined and converge to a hyperplane
$T\subset \PP^n$;
\item[(c)] $T$ does not contain $T_q \mathcal S^N_{\alpha(q)}$ (the tangent space to the stratum through $q$).
\end{itemize}
Since $N$ is even, one can set $t_i\in\PP$ by $t_i^N:=f(q_i)$. Set $x_i:=(q_i,t_i)\in G_N$; then $x_i\to x:=(q,0)\in V\times\{0\}$.
By Corollary~\ref{cor_ts}, the real tangent space of the level set satisfies
\[
T_{q_i}\big(f^{-1}(f(q_i))\big)\;=\;\ker Df_{q_i}\subset \PP^n,
\]
and since $\operatorname{rank}Df_{q_i}=2$, each $\ker Df_{q_i}$ is a perplex hyperplane.
Likewise, because $q_i$ is regular for $f$, the differential of $g_N$ at $x_i$,
\[
D(g_N)_{x_i}:(\R^{2n}\times \R^2)\longrightarrow \R^2,
\qquad
D(g_N)_{x_i}(v,s)\;=\; Df_{q_i}(v) \;-\; N\,t_i^{N-1} * s,
\]
has real rank $2$, hence by Corollary~\ref{cor_ts} applied to $g_N$ the tangent space
\[
T_{x_i}G_N=\ker D(g_N)_{x_i}
\]
is a perplex hyperplane in $\PP^{n+1}$.
Moreover,
\[
T_{x_i}\big(f^{-1}(f(q_i))\times\{t_i\}\big)
=
\ker Df_{q_i} \times \{0\}
\subset
\ker D(g_N)_{x_i}
=
T_{x_i}G_N.
\]
By Whitney condition (a) for the stratification $\mathcal G^N$ of $G_N$, the limit
\[
\tau\;:=\;\lim_{i\to\infty} T_{x_i}G_N
\]
exists (up to subsequences) and contains the tangent space
$T_x \mathcal G^N_{\alpha(x)}$ of the stratum of $G_N$ through $x=(q,0)$.
By construction of $\mathcal S^N$, one has
\[
T_x \mathcal G^N_{\alpha(x)}\;\subset\; T_q \mathcal S^N_{\alpha(q)} \times \{0\}.
\]
On the other hand, since $T_{x_i}G_N\supset \ker Df_{q_i}\times\{0\}$ for all $i$ and
$\ker Df_{q_i}\to T$ in the Grassmannian, we also have $\tau\supset T\times\{0\}$.
Therefore
\[
T\times\{0\}\;\subset\;\tau\;\supset\; T_q \mathcal S^N_{\alpha(q)}\times\{0\}.
\]
Because $\tau$ is a perplex hyperplane in $\PP^{n+1}$, whereas both $T\times\{0\}$ and
$T_q \mathcal S^N_{\alpha(q)}\times\{0\}$ are subspaces of $\PP^n\times\{0\}$, the assumption (c)
(that $T$ does not contain $T_q \mathcal S^N_{\alpha(q)}$) forces
\begin{equation}\label{eq:tau-vertical}
\tau \;=\; \PP^n \times \{0\}.
\end{equation}

We now derive a contradiction from \eqref{eq:tau-vertical} using Theorem~\ref{theo_Loja} and Lemma~\ref{lemma_limites}.
By Theorem~\ref{theo_Loja} there exist $0<\theta<1$, $C>0$, and (after shrinking $U$ if necessary) a neighborhood
$U\ni 0$ such that
\[
\|\nabla f(p)\|_m \;\ge\; C\,\|f(p)\|_m^\theta
\qquad\text{for all }p\in U.
\]
Evaluating at $p=q_i$ and using $f(q_i)=t_i^N$ gives
\[
\|\nabla f(q_i)\|_m \;\ge\; C\,\|t_i^N\|_m^\theta.
\]
Hence, for each $i$ with $t_i\neq 0$,
\begin{equation}\label{eq:ratio}
\frac{\|\nabla f(q_i)\|_m}{N\,\|t_i^{N-1}\|_m}
\;\ge\;
\frac{C}{N}\,\frac{\|t_i^N\|_m^\theta}{\|t_i^{N-1}\|_m}.
\end{equation}
By Lemma~\ref{lemma_norm_limits}, choosing $N>\tfrac{1}{1-\theta}$ ensures that the right-hand side
of \eqref{eq:ratio} tends to $+\infty$ as $i\to\infty$ (recall $t_i\to 0$ since $x_i\to (q,0)$).
Therefore
\[
\frac{\|\nabla f(q_i)\|_m}{N\,\|t_i^{N-1}\|_m}\;\longrightarrow\;+\infty.
\]

Consider now the gradient of $g_N$ at $x_i$ written in $\PP^{\,n+1}$–coordinates:
\[
\nabla g_N(x_i) \;=\; \big(\nabla f(q_i),\;-\;N\,t_i^{N-1}\big).
\]
Decompose $\nabla g_N(x_i)=(\alpha_i,\beta_i)$ with $\alpha_i:=\nabla f(q_i)\in\PP^n$ and
$\beta_i:=-N\,t_i^{N-1}\in\PP$. The previous limit implies
\[
\frac{\|\alpha_i\|_m}{\|\beta_i\|_m}\;\longrightarrow\;+\infty.
\]
Since each $x_i$ is a regular point of $g_N$ (as observed above), Corollary~\ref{cor_ts} applies to $g_N$,
and we may invoke Lemma~\ref{lemma_limites} to conclude that the hyperplanes
$\ker D(g_N)_{x_i}=T_{x_i}G_N$ \emph{do not} converge to $\PP^n\times\{0\}$.
This contradicts \eqref{eq:tau-vertical}.
\end{proof}

\subsection{Milnor--Lê fibration}

For $\varepsilon,\eta>0$, let $\B_\varepsilon$ be the closed ball in $\R^{2n}$ of radius $\varepsilon$ and $\D_\eta$ the closed disk in $\R^2$ of radius $\eta$.  
Given a real-analytic map $f:\R^{2n}\to\R^2$, let $\Delta_f$ denote the discriminant, i.e. the image of its critical set.

As in the complex case, Theorem~\ref{theo_stratification} ensures that for small $\varepsilon>0$, the restriction of $f$ to $\mathbb S_\varepsilon\setminus f^{-1}(\Delta_f)$ is a submersion.  
By Ehresmann’s fibration theorem for manifolds with boundary (see \cite[Thm.~8.2]{Hirsch1976}), we obtain the following analogue of Milnor’s classical theorem (see \cite{Seade} for a survey).

\begin{theo}\label{theo_MilnorFibration}
Let $\PP$ be a nondegenerate perplex algebra, and let $f : \PP^n \to \PP$ be a $\PP$–analytic function with $f(0)=0$.  
There exist real numbers $0<\eta<\varepsilon$ such that the restriction
\[
f:\ f^{-1}(\D_\eta\setminus\Delta_f)\cap \B_\varepsilon
\;\longrightarrow\;\D_\eta\setminus\Delta_f
\]
is the projection of a smooth locally trivial fibration.
\end{theo}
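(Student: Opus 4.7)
The plan is to follow the classical Milnor--L\^e scheme, using Theorem~\ref{theo_stratification} as the perplex replacement for the \L{}ojasiewicz-based transversality lemma of the complex case. Setting $V = f^{-1}(0)$, I would fix a good Whitney stratification $\mathcal{S} = \{\mathcal{S}_\alpha\}$ of $V$ in a neighborhood $U$ of $0$, and then produce two radii: a small $\varepsilon>0$ for which $\mathbb{S}_\varepsilon$ is transverse to every stratum of $\mathcal{S}$, and a smaller $\eta>0$ for which $\mathbb{S}_\varepsilon$ is transverse to $f^{-1}(t)$ for every $t\in\D_\eta\setminus\Delta_f$. Ehresmann's fibration theorem for manifolds with boundary then closes the argument.

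For the first radius, I would invoke the standard sphere-selection lemma: the function $\rho(x)=\|x\|^2$ restricted to each real-analytic stratum $\mathcal{S}_\alpha$ has a discrete set of critical values near $0$ (by the curve selection lemma), so $\mathbb{S}_\varepsilon$ meets every stratum transversally inside $U$ for all sufficiently small $\varepsilon$. Fixing such an $\varepsilon$, the propagation of transversality to nearby fibers is the usual compactness-and-contradiction step. Suppose there were sequences $t_i\in\D_\eta\setminus\Delta_f$ with $t_i\to 0$ and points $q_i\in\mathbb{S}_\varepsilon\cap f^{-1}(t_i)$ at which transversality fails; after extracting a subsequence, $q_i\to q\in\mathbb{S}_\varepsilon\cap V$ and the tangent spaces $T_{q_i}(f^{-1}(t_i))$ converge in the Grassmannian to some plane $T$. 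The good-stratification hypothesis forces $T\supset T_q\mathcal{S}_{\alpha(q)}$, while the transversality of $\mathbb{S}_\varepsilon$ to $\mathcal{S}_{\alpha(q)}$ at $q$ yields $T_q\mathbb{S}_\varepsilon+T_q\mathcal{S}_{\alpha(q)}=T_q\R^{2n}$. Combining these gives $T_q\mathbb{S}_\varepsilon+T=T_q\R^{2n}$, which contradicts the assumed failure of transversality at the $q_i$ by continuity of the open direct-sum condition on Grassmannians.

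With both transversalities in hand, the restriction $f:f^{-1}(\D_\eta\setminus\Delta_f)\cap\B_\varepsilon\to\D_\eta\setminus\Delta_f$ is a proper smooth submersion of a manifold with boundary: interior smoothness comes from avoiding $\Delta_f$, and the boundary statement is precisely the propagated transversality along $\mathbb{S}_\varepsilon$. Ehresmann's theorem for manifolds with boundary (\cite[Thm.~8.2]{Hirsch1976}) then delivers the locally trivial smooth fibration. The principal obstacle I expect is the propagation step, which is exactly where the good stratification --- and, through it, the perplex \L{}ojasiewicz inequality of Theorem~\ref{theo_Loja} --- is indispensable; the remaining analytic input is just the identification $\ker Df_p=\ker df_p$ from Corollary~\ref{cor_ts}, which guarantees that the Grassmannian-limit argument sees the genuine perplex hyperplane structure rather than an artifact of zero divisors, and is precisely where the nondegeneracy hypothesis on $\PP$ plays its role.
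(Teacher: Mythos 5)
Your argument is correct and is essentially an expanded version of the paper's own two‑sentence proof: good stratification from Theorem~\ref{theo_stratification}, sphere‑selection to get $\varepsilon$ so that $\mathbb S_\varepsilon$ is transverse to all strata, a compactness‑and‑Grassmannian‑limit argument to propagate that transversality to nearby fibers $f^{-1}(t)$ for $t\in\D_\eta\setminus\Delta_f$, and Ehresmann's theorem for manifolds with boundary. The paper says only ``As in the complex case, Theorem~\ref{theo_stratification} ensures that for small $\varepsilon$ the restriction of $f$ to $\mathbb S_\varepsilon\setminus f^{-1}(\Delta_f)$ is a submersion; by Ehresmann's fibration theorem\ldots'' — you have supplied exactly the details behind that statement.

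One small correction of attribution: you claim the nondegeneracy hypothesis on $\PP$ is what makes Corollary~\ref{cor_ts} (the identification $\ker df_p=\ker Df_p$) work, and that this is where the Grassmannian‑limit argument needs it. In fact Corollary~\ref{cor_ts} holds in any perplex algebra; what it requires is only regularity of the point $p$. The nondegeneracy is consumed entirely inside the proof of Theorem~\ref{theo_stratification}: there one applies Lemma~\ref{lemma_norm_limits} to $t\mapsto t^{N-1}$ and $t\mapsto t^N$, whose hypothesis is that $\hat t^k\neq 0$ for all $\hat t$ of unit norm — which, by Proposition~\ref{prop:no-nilpotent}, is precisely the condition $\Delta\neq 0$, i.e.\ that $\PP$ has no nonzero nilpotents. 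Once the good stratification is available, your propagation and Ehresmann steps are algebra‑independent; the perplex structure plays no further role at that stage.
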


\section{Perspectives and open problems in perplex geometry and singularities}\label{sec:open}

The results obtained in this work open the way to a broader research program. 
They suggest that perplex analysis provides a genuine intermediate framework between real and complex theories, 
capable of supporting new tools in singularity theory and beyond. 
We conclude by formulating a few natural questions that highlight the main challenges and possible directions for future developments.

\medskip
\noindent\textbf{(Q1) Topology of the Milnor fiber(s).}
The fibration in Theorem~\ref{theo_MilnorFibration} produces, for each connected component 
$C$ of $\D_\eta \setminus \Delta_f$, a Milnor–Lê fiber
\[
F_{C,\varepsilon}\;:=\;f^{-1}(c)\cap \B_\varepsilon,
\qquad c\in C,
\]
well defined up to diffeomorphism for $0<\eta\ll\varepsilon\ll1$.  
Unlike the complex case, the discriminant $\Delta_f$ may disconnect the disk $\D_\eta$, 
so several non-equivalent fibers can coexist.  
What topological invariants (homotopy type, Betti numbers, or analogues of polar multiplicities) 
can be associated to each fiber, and how do they depend on algebraic data of $f$ and of the chosen 
perplex algebra $\PP$?  
In particular, when $\PP$ is not a field, do zero divisors introduce new topological phenomena 
absent in the complex case?

\medskip
\noindent\textbf{(Q2) Topological invariants for perplex functions.}
In the classical setting, the Milnor number $\mu$ measures the complexity of an isolated singularity
via the dimension of a Jacobian algebra \cite{Mi, Seade}, while the Lê numbers capture the topology of the Milnor fiber of non-isolated singularities \cite{Massey1995}.  
In the perplex setting, the presence of zero divisors prevents the occurrence of genuinely isolated singularities unless $\PP\cong\C$.  
Thus, the natural question is: Can one develop \emph{perplex Lê numbers} that capture the topology of the Milnor fibers, and that specialize to the classical Lê numbers in the complex case while detecting new phenomena arising from zero divisors?

\medskip
\noindent\textbf{(Q3) Classification of singularities.}
Arnol’d’s celebrated program classifies complex singularities up to right-left equivalence, 
with simple and unimodal series (see e.g.~\cite{Arnold1972,ArnoldGuseinZadeVarchenko1985,ArnoldGuseinZadeVarchenko1988}).  
Is there a meaningful analogue in the perplex setting, where the presence of zero divisors 
and the variability of $(a,b)\in\mathcal P$ may lead to new deformation patterns 
and moduli of singularities?

\medskip
\noindent\textbf{(Q4) Perplex stratifications.}
Whitney and Thom conditions play a central role in the stability of singularities and 
in the existence of Milnor fibrations (see for instance Whitney’s foundational work~\cite{Whitney1965}, 
Thom’s conditions~\cite{Thom1969}, and Mather’s theory of stability~\cite{Mather1970}).  
We proved here that $\PP$--analytic functions admit good stratifications in the sense of Hamm–Lê~\cite{HL}.  
Can one develop a systematic theory of \emph{perplex stratifications}, 
possibly refining the classical conditions to capture the special algebraic features of $\PP$?

\medskip
\noindent\textbf{(Q5) Geometry of perplex analytic sets.}
Complex analytic sets admit a rich geometric theory, including dimension theory, irreducibility, 
normalization, and desingularization (see e.g.~Hironaka’s resolution theorem~\cite{Hironaka1964} and 
Chirka’s monograph~\cite{Chirka1989}).  
What are the corresponding notions for sets defined by $\PP$--analytic equations?  
Do these sets admit meaningful decompositions or geometric invariants that generalize the complex case?

\medskip
\noindent\textbf{(Q6) Perplex dynamics.}
Iterated maps in the complex plane give rise to Julia and Fatou sets, central in holomorphic dynamics 
(see Julia’s pioneering work~\cite{Julia1918}, Fatou’s classical studies~\cite{Fatou1919}, and modern expositions 
by Milnor~\cite{Milnor2006} and Carleson--Gamelin~\cite{Carleson1993}).  
What dynamical phenomena emerge from iterating perplex analytic maps? Two new directions appear in the perplex setting: 
the variation of the algebra $(a,b)\in\mathcal P$, which gives rise to families of dynamical systems
parametrized by $\mathcal P$, and the higher-dimensional case $\PP^n$, 
where the presence of zero divisors may lead to new orbit structures or bifurcation phenomena.  
What kinds of limit sets, stability notions, and parameter-space bifurcations arise in these broader contexts?

\medskip
\noindent\textbf{(Q7) Global geometry and moduli.}
Families of complex singularities often organize into moduli spaces, governed by deformation theory 
(see for instance the work of Kuranishi~\cite{Kuranishi1962}, Kodaira~\cite{Kodaira1963}, 
and Pinkham~\cite{Pinkham1974}).  
Is it possible to construct moduli of $\PP$--analytic singularities, incorporating the additional freedom of varying $(a,b)\in\mathcal P$?  
Could such spaces provide a new bridge between real and complex singularity theories, 
with new deformation patterns reflecting the intermediate nature of perplex geometry?

\medskip
\noindent Altogether, these questions suggest that the theory of perplex analytic functions 
opens a genuinely new field at the intersection of real and complex geometry.  
Exploring their fibers, invariants, and classifications may reveal topological phenomena 
that interpolate between the two classical worlds, and perhaps expose structures 
that are invisible from either side alone.  
In this sense, perplex analysis not only extends Milnor’s vision, but also points 
toward a broader singularity theory still to be developed.

\end{document}